\newtheorem{theorem}{Theorem}
\newtheorem{lemma}[theorem]{Lemma}
\newtheorem{corollary}[theorem]{Corollary}
\newtheorem{definition}[theorem]{Definition}
\newenvironment{proof}{\noindent\textbf{Proof.}}{\hfill $\square$}
\allowdisplaybreaks \numberwithin{equation}{section}
\title{Quality-Bayesian approach to inverse acoustic source problems with partial data}
\author{Zhaoxing Li \thanks{
School of Mathematical Sciences,
University of Electronic Science and Technology of China,
Chengdu, 611731, China.
\texttt{E-mail:~lzx130682@163.com.} The research of Li is partially supported by  NNSF of China under Grant 11771068.
}
\and Yanfang Liu \thanks{
Department of Mathematical Sciences,
Michigan Technological University,
Houghton, MI 49931, U.S.A.
\texttt{E-mail:~yanfangl@mtu.edu.} The research of Liu is partially supported by an MTU Research Excellence Fund.
}
\and Jiguang Sun
\thanks{Corresponding author:
Department of Mathematical Sciences,
Michigan Technological University,
Houghton, MI 49931, U.S.A.
\texttt{E-mail:~jiguangs@mtu.edu.}
}
\and Liwei Xu \thanks{
School of Mathematical Sciences,
University of Electronic Science and Technology of China,
Chengdu, 611731, China.
\texttt{E-mail:~xul@uestc.edu.cn.} The research of Xu is partially supported by  NNSF of China under Grant 11771068.
}
}
\date{}
\begin{document}

\maketitle

\begin{quote}
\small
{\bf Abstract:}\quad
A quality-Bayesian approach, combining the direct sampling method and the Bayesian inversion, is proposed to reconstruct the locations and intensities of the unknown acoustic sources using partial data. First, we extend the direct sampling method by constructing a new indicator function to obtain the approximate locations of the sources. The behavior of the indicator is analyzed. Second, the inverse problem is formulated as a statistical inference problem using the Bayes' formula. The well-posedness of the posterior distribution is proved. The source locations obtained in the first step are coded in the priors. Then an Metropolis-Hastings MCMC algorithm is used to explore the posterior density. Both steps use the same physical model and measured data. Numerical experiments show that the proposed method is effective even with partial data.

{\it Keywords:}\quad
inverse source problem;\;
Helmholtz equation;\;
direct sampling method;\;
Bayesian inversion;\;
Metropolis-Hastings algorithm

MSC 2010: 35R30,\;62F15
\end{quote}

\section{Introduction }
We consider the inverse problem to reconstruct the locations and intensities of the acoustic sources from measured near-field or far-field partial data. 
The problem has importance in various applications such as biomedical imaging and the identification of pollution sources \cite{isakov2006book,eller2009}.
We refer the readers to \cite{isakov2006book,acosta2012, liu2018extended, alzaalig2017fast} 
and the references therein for different methods in literature.

Recently, a new quality-Bayesian approach, which combines the qualitative method and the Bayesian inference, 
is proposed for the inverse scattering problem \cite{li2019limited}.
The method has two steps. First, qualitative information  of the obstacle such as the size and location is obtained 
using the extended sampling method \cite{liu2018extended, liu2019extended}. 
Second, the inverse problem is formulated as a statistical inference problem using the Bayes' formula \cite{kaipio2015statistical, stuart2010}. 
Then a Markov chain Monte Carlo (MCMC) algorithm is used to explore the posterior density.
The information obtained in the first step is coded in the priors. 
The method provides satisfactory results with partial data.

In this paper, we extend the methodology in \cite{li2019limited} to the inverse acoustic source problem with partial data.
In the first step, a direct sampling method (DSM) is employed to approximate the locations of the sources.
The DSM is a non-iterative method to reconstruct the unknown scatterers \cite{potthast2006, ito2012dsm}. 
Recently, the DSM was used in \cite{Abdelaziz2015JMAA, zhang2019} 
to reconstruct the locations of multiple multipolar sources from single-frequency measurement Cauchy data.
Using multiple frequency data, we construct a new indicator function for the inverse source problems. Similar to those in \cite{Liu2017IP, alzaalig2017fast, zhang2019},
the indicator decays as the Bessel function when the sampling point moves away from the source (see, e.g., \cite{Liu2017IP}). 
The DSM has some attractive features: (\romannumeral1) it is easy to implement and computationally cheap;
(\romannumeral2) the algorithm does not need a priori information of the sources; and (\romannumeral3) 
the method can provide us with an accurate and reliable location estimation of the unknown sources; 
(\romannumeral4) the method is robust for the noise data. These features make the DSM a good candidate
to obtain some rough information of the sources.

In the second step, for more detailed properties of the sources, we resort to the Bayesian inversion.
In Bayesian statistics, the parameters are viewed as random variables. 
The number and locations of the sources obtained in the first step are coded in the priors. 
By using Bayes' formula, a posterior distribution of the unknown parameters is obtained. 
The well-posedness is proved and a Metropolis-Hastings (MH) MCMC algorithm is used to explore the posterior density. 
Consequently, statistical estimates for the unknown parameters can be obtained.
We refer the readers to \cite{kaipio2015statistical,stuart2010} for the 
Bayesian inversion and \cite{BaussardEtal2001IP, BuiGhatts2014SIAMUQ, li2019limited, liu2019inverse} for its applications to inverse problems.


The rest of paper is organized as follows. In Section 2, we present the direct and inverse source problems under investigation. 
In Section 3, we develop a DSM to obtain the approximate locations of the unknown sources. 
In Section 4, the Bayesian method is employed to reconstruct the details of the sources. 
In Section 5, numerical examples are presented to show the effectiveness of the quality-Bayesian approach using complete or partial data.
Finally, In Section~\ref{conclusions}, we draw some conclusions.

\section{Direct and Inverse Source Problems}
Let $F\in L^2(\mathbb{R}^2)$ denote the source with $\text{supp}\, F \subset V$, 
where $V$ is a bounded domain of $\mathbb{R}^2$. 
The time-harmonic acoustic wave $u\in H^1_{loc}(\mathbb{R}^2)$  radiated by $F$ satisfies
\begin{subequations}\label{Helmholtz1}
\begin{equation}\label{Helmholtz2}
\Delta u+k^2 u=F \quad \text{in}\; \mathbb{R}^2,
\end{equation}
\begin{equation}\label{Sommerfeld}
\lim\limits_{r \rightarrow \infty} \sqrt{r}
\left(\frac{\partial{u}}{\partial r}-i k u\right)=0, \quad r=|x|,
\end{equation}
\end{subequations}
where $k$ is the wave number, \eqref{Helmholtz2} is the Helmholtz equation and \eqref{Sommerfeld} is the Sommerfeld radiation condition.
Recall that the fundamental solution of the Helmholtz equation is given by
\begin{equation*}
\Phi_k(x,y)=
\dfrac{i}{4}H^{(1)}_0(k|x-y|),
\end{equation*}
where $H_0^{(1)}$ is the Hankel function of zeroth order and the first kind. It is well-known that $\Phi_k(x,y)$ 
satisfies \cite{colton2013book}
\begin{equation}\label{fundamentalsol2}
(\Delta +k^2) \Phi_k(x,y)=-\delta(x-y),
\end{equation}
where $\delta$ is the Dirac distribution.
The solution $u$ of \eqref{Helmholtz1} has the asymptotic expansion \cite{colton2013book}
\begin{equation*}
u(x,k)=\frac{e^{i\frac{\pi}{4}}}{\sqrt{8k\pi}}
\frac{e^{ikr}}{\sqrt{r}}
\left\{u^{\infty}(\hat{x},k)+\mathcal{O}\left(\frac{1}{r}\right)\right\}
\quad \text{as}\; r\rightarrow\infty,
\end{equation*}
where $\hat{x}={x}/{|x|}\in\mathbb{S}$, $\mathbb{S}:=\{|\hat{x}| = 1 : \hat{x} \in \mathbb R^2\}$. 
The function $u^{\infty}(\hat{x},k)$, $\hat{x} \in \mathbb{S}$, is the far field pattern of $u(x,k)$.
The solution $u$ to \eqref{Helmholtz1} and its far-field $u^{\infty}$ can be written as \cite{colton2013book}
\begin{equation}\label{near_field}
u({x},k)=\int_{V}\Phi_k(x,y) F(y)dy,
\end{equation}
\begin{equation}\label{far_field}
u^{\infty}(\hat{x},k)=\int_{V}\Phi^{\infty}_k(\hat{x},y) F(y)dy,
\end{equation}
where
\begin{equation}
\Phi^{\infty}_k(\hat{x},y)=\exp{(-ik\hat{x}\cdot y)},
\end{equation}
is the far-field pattern of $\Phi_k(x,y)$.

The inverse source problem (ISP) of interest is to determine $F$ from one of the following data sets:
\begin{itemize}
 \item[{\romannumeral1})]  $\left\{u(x,k): x\in \Gamma,\; k\in[k_m, k_M]\right\}$;
 \item[{\romannumeral2})]  $\left\{u^{\infty}(\hat{x},k): \hat{x}\in S \subset \mathbb{S},
 \;k\in[k_m, k_M]\right\}$;
\end{itemize}
where $\Gamma$ is a measurement curve outside $V$, and $k_m < k_M$ are two fixed wave numbers.
By complete data, we mean that $\Gamma$ is a simple closed curve with $V$ inside, or $S = \mathbb S$. 
Otherwise, the measured data is partial.

In this paper, we consider two $F$'s for \eqref{Helmholtz1}. The first one is the combination of monopole and dipole sources (see, e.g.,\cite{zhang2019})
\begin{equation}\label{F1}
F_j(x)= ( \lambda_j+\xi_j\cdot \nabla)\delta(x-z_{j}), \quad j=1, \ldots, J,
\end{equation}
where $J$ is the number of sources, $z_j$'s represent the source locations, 
$\lambda_j$'s and $\xi_j$'s are the scalar and vector intensities such that $|\lambda_j|+|\xi_j|\neq0$ and $|\lambda_j\xi_j|=0$.
The second one is the combination of 
\begin{equation}\label{F2}
F_j(x)=\sum\limits_{j=1}^{J} \lambda_j\exp(-\xi_j  |x-z_{j}|^2), \quad j=1, \ldots, J,
\end{equation}
where both $\lambda_j$'s and $\xi_j$'s are the scalar intensities.
For simplicity, we write
\begin{equation}\label{Fx}
F(x;z, {\boldsymbol \lambda}, {\boldsymbol \xi})=\sum\limits_{j=1}^J F_j(x;z_j,\lambda_j, \xi_j),\quad \text{supp}\,
F_j(x;z_j,\lambda_j, \xi_j)\subset V_j,
\end{equation}
where ${\boldsymbol \lambda}=(\lambda_1,\ldots, \lambda_J)$, ${\boldsymbol \xi}=(\xi_1, \ldots, \xi_J)$, 
$V_j \subset V$. 

\section{Direct Sampling Method}

Given the measured far-field or near-field data, we propose a direct sampling method (DSM) to reconstruct the number and locations of the sources.
The method only involves numerical integrations and thus is very fast in general.
Moreover, it is robust for noisy partial data, which makes it a good candidate to obtain some qualitative information.

Let $D$ be the sampling domain such that $V \subset D$.
Let $n$ be the unit outward normal to $\partial D$.
Denote by $\mathcal{A}:=\{k_i\}_{i=1}^N\subset [k_m,k_M]$ a finite set of discrete wave numbers.
We define two functions
\begin{equation}\label{auxiliaryeq1}
I(z_p)=\sum\limits_{k_i\in\mathcal{A} }\int_{\Gamma}
u(x,k_i)\bar{\Phi}_{k_i}(x,z_p) ds(x),\quad z_p \in D,
\end{equation}
\begin{equation}\label{auxiliaryeq2}
I^{\infty}(z_p)=\sum\limits_{k_i\in\mathcal{A} }\int_{\mathbb{S}}
u^{\infty}(\hat{x},k_i)\bar{\Phi}^{\infty}_{k_i}(\hat{x},z_p) ds(\hat{x}),\quad z_p\in D,
\end{equation}
where $\bar{\Phi}_{k_i}(x,z_p)$ and $\bar{\Phi}^{\infty}_{k_i}(\hat{x},z_p)$ are the conjugates of 
${\Phi}_{k_i}(x,z_p)$ and ${\Phi}^{\infty}_{k_i}(\hat{x},z_p)$, respectively.

\subsection{Near-field Indicator}
For near-field data, inserting \eqref{near_field} into \eqref{auxiliaryeq1} and changing the order of integration, we have that
\begin{equation}\label{secDSM_eq1}
\begin{split}
I(z_p)&=\sum\limits_{k_i\in\mathcal{A} }\int_{\Gamma}\int_V\Phi_{k_i}(x,y)F(y)dy
\bar{\Phi}_{k_i}(x,z_p) ds(x)\\
&=\sum\limits_{k_i\in\mathcal{A} }
\int_V\int_{\Gamma}\Phi_{k_i}(x,y)\bar{\Phi}_{k_i}(x,z_p) ds(x) F(y)dy.
\end{split}
\end{equation}
From \eqref{fundamentalsol2}, for $\Phi_{k_i}(x,y)$ and $\bar{\Phi}_{k_i}(x,z_p)$, it holds that
\begin{equation}\label{secDSM_eq3}
\int_{D}(  \Delta \Phi_{k_i}(x,y)+k^2 \Phi_{k_i}(x,y)  )
\bar{\Phi}_{k_i}(x,z_p)dx=-\bar{\Phi}_{k_i}(y,z_p),
\end{equation}
\begin{equation}\label{secDSM_eq4}
\int_{D}(  \Delta \bar\Phi_{k_i}(x,z_p)+k^2 \bar\Phi_{k_i}(x,z_p)  )
{\Phi}_{k_i}(x,y)dx=-{\Phi}_{k_i}(y,z_p).
\end{equation}
Using \eqref{secDSM_eq3}, \eqref{secDSM_eq4}, the Green's formula and the Sommerfeld radiation condition, we derive
\begin{equation}\label{secDSM_eq5}
\begin{split}
\quad{\Phi}_{k_i}(y,z_p)-\bar{\Phi}_{k_i}(y,z_p)
&=\int_{\Gamma}\left\{  \frac{\partial \Phi_{k_i}(x,y)}{\partial n}\bar\Phi_{k_i}(x,z_p)
-\frac{\partial \bar\Phi_{k_i}(x,z_p)}{\partial n}\Phi_{k_i}(x,y)   \right\}ds(x)\\
&\approx\int_{\Gamma}(  ik_i\Phi_{k_i}(x,y) \bar\Phi_{k_i}(x,z_p)
+ik_i \bar \Phi_{k_i}(x,z_p) \Phi_{k_i}(x,y)  )ds(x),
\end{split}
\end{equation}
which implies
\begin{equation}\label{secDSM_eq6}
\int_{\Gamma}k_i\Phi_{k_i}(x,y)\bar\Phi_{k_i}(x,z_p)ds(x)\approx Im(\Phi_{k_i}(y,z_p)).
\end{equation}

For $y\in V$ and $z_p\in D$, define the kernel function for \eqref{secDSM_eq1} 
\begin{equation}\label{kernelH}
H(y,z_p)=\int_{\Gamma}\Phi_{k_i}(x,y)\bar{\Phi}_{k_i}(x,z_p) ds(x), \quad k_i\in\mathcal{A}.
\end{equation}
Thus, 
\begin{equation}\label{Hpropto}
H(y,z_p) \,\propto \, \frac{1}{k_i}J_0(k_i|y-z_p|), \quad k_i\in \mathcal{A},
\end{equation}
where $J_0$ is the zeroth order Bessel function and $\propto$ means ``proportional to".

Due to \eqref{secDSM_eq1} and \eqref{Hpropto}, $I(z_p)$ is a superposition of the Bessel functions.
From the asymptotic property of $J_0(t)$ \cite{colton2013book}
\begin{equation}\label{J_0t}
J_0(t)=\frac{\sin t+\cos t}{\sqrt{\pi t} }\left\{1+\mathcal{O}\left(\frac{1}{t}\right)  \right\},\quad \text{as}\; t\rightarrow \infty,
\end{equation}
$I(z_p)$ decays similarly when $z_p$ moves away from $y$.

Now we define the indicator for near-field data
\begin{equation}\label{I_DSM}
{I}_{DSM}(z_p)=\frac{\Big| \sum\limits_{k_i\in\mathcal{A}}
\langle u(x,k_i),\Phi_{k_i}(x,z_p) \rangle_{L^2(\Gamma)}\Big|}
{\sum\limits_{k_i\in\mathcal{A}} \|u(x,k_i)\|_{L^2(\Gamma)}
\|\Phi_{k_i}(x,z_p)\|_{L^2(\Gamma)}}, \quad \forall z_p\in D,
\end{equation}
where the inner product $\langle \cdot,\cdot \rangle_{L^2(\Gamma)}$ is defined as
\begin{equation*}
\langle u(x,k_i),\Phi_{k_i}(x,z_p) \rangle_{L^2(\Gamma)}=
\int_{\Gamma}u(x,k_i)\bar{\Phi}_{k_i}(x,z_p) ds(x).
\end{equation*}

\subsection{Far-field Indicator}
Substituting \eqref{far_field} into \eqref{auxiliaryeq2}, and changing the order of integration, we have that
\begin{equation}\label{secDSM_eq8}
\begin{split}
I^{\infty}(z_p)&=\sum\limits_{k_i\in\mathcal{A} }
\int_{S}\int_V\Phi^{\infty}_{k_i}(\hat{x},y)F(y)dy
\bar{\Phi}^{\infty}_{k_i}(\hat{x},z_p) ds(\hat{x})\\
&=\sum\limits_{k_i\in\mathcal{A} }
\int_V\int_{S}e^{-ik_i\hat{x}\cdot y}e^{ik_i\hat{x}\cdot z_p} ds(\hat{x}) F(y)dy.
\end{split}
\end{equation}
For $y\in V$ and $z_p\in D$, define the kernel function
\begin{equation}\label{secDSM_eq9}
H^{\infty}(y,z_p)=\int_{S}e^{ik_i\hat{x}\cdot( z_p- y)}  ds(\hat{x}), \quad k_i\in \mathcal{A}.
\end{equation}
By Funk-Hecke formula \cite{colton2013book},
\begin{equation}\label{H_infty}
H^{\infty}(y,z_p)=2\pi J_0(k_i|y-z_p|), \quad k_i\in \mathcal{A}.
\end{equation}
Due to \eqref{J_0t}, $I^{\infty}(z_p)$ is large when $z_p\in V$ 
and decays when $z_p\rightarrow \infty$.
Consequently, we define the indicator for far-field data as
\begin{equation}\label{I_inftyDSM}
{I}^{\infty}_{DSM}(z_p)=\frac{ \Big| \sum\limits_{k_i\in\mathcal{A}}
\langle u^{\infty}(\hat{x},k_i),\Phi^{\infty}_{k_i}(\hat{x},z_p) \rangle_{L^2(S)}\Big| }
{\sum\limits_{k_i\in\mathcal{A}} \|u^{\infty}(\hat{x},k_i)\|_{L^2(S)}
\|\Phi^{\infty}_{k_i}(\hat{x},z_p)\|_{L^2(S)}},
\end{equation}
where
\begin{equation*}
\langle u^{\infty}(\hat{x},k_i),\Phi^{\infty}_{k_i}(\hat{x},z_p) \rangle_{L^2(S)}=
\int_{S}u^{\infty}(\hat{x},k_i)\bar{\Phi}^{\infty}_{k_i}(\hat{x},z_p) ds(\hat{x}).
\end{equation*}

The algorithm to approximate the locations of the unknown sources is as follows.

\vskip 2mm
{\bf DSM for ISP}
\begin{itemize}
  \item[1.] Collect the data $u(x,k_i)$ on $\Gamma$ (or $u^{\infty}(\hat{x},k_i)$ on $S$) for $k_i\in \mathcal{A}$.
  \item[2.] Generate sampling points for $D$.
  \item[3.] Calculate ${I}_{DSM}(z_p)$ (or ${I}^{\infty}_{DSM}(z_p)$) for all sampling points $z_p\in D$.
  \item[4.] Find the local maximizers $z^{DSM}$'s of ${I}_{DSM}(z_p)$ (or ${I}^{\infty}_{DSM}(z_p)$).
\end{itemize}
The DSM provides the number and locations of the sources. Such qualitative information is integrated into
the priors for the following Bayesian inversion.

\section{Bayesian Inversion}
In this section, we present the Bayesian inversion for the inverse acoustic source problem using the near-field data. 
The far-field case is similar. Let 
${\boldsymbol \phi}=({\boldsymbol \lambda}, {\boldsymbol \xi}, z_1, \ldots, z_J)^T$.
Define the forward operator $\mathcal{K}: \mathbb{R}^{N}\rightarrow L^2(\Gamma)$ as follows
\begin{equation*}
\mathcal{K}({\boldsymbol \phi}):=\int_{V}\Phi_k(y,x) F(x; {\boldsymbol \phi})dx.
\end{equation*}
For $F$ given by \eqref{F1}, $N=5J$. For $F$ given by \eqref{F2}, $N=4J$. 

The statistical modal for \eqref{Helmholtz1} can be written as
\begin{equation*}
Y=\mathcal{K}({\boldsymbol \phi})+ Z,
\end{equation*}
where $Y$ is the noisy measurement of $u(x,k)$ and $Z$ is the Gaussian noise, i.e., $Z \sim \mathcal{N}(0,\gamma^2 I)$. 

Denote by $\mu^Y(d{\boldsymbol \phi})=\mathbb{P}(d {\boldsymbol \phi}  |Y)$ the posterior measure and 
by $\mu_0(d{\boldsymbol \phi})= \mathbb{P}(d {\boldsymbol \phi})$ the prior measure for ${\boldsymbol \phi}$. 
The statistical inverse problem is to find the posterior measure $\mu^Y(d{\boldsymbol \phi})$.
Assume that $\mu^Y$ is absolutely continuous with respect to $\mu_0$, i.e., $\mu^Y\ll \mu_0$. By Bayes' formula,
\begin{equation}\label{Bayesrule}
\frac{d \mu^Y}{d \mu_0}({\boldsymbol \phi})=
\frac{1}{L(Y)}\exp\left(-G({\boldsymbol \phi};Y)\right),
\end{equation}
where
$\exp(-G({\boldsymbol \phi};Y))$
is the likelihood such that
\begin{equation}\label{likelipptential}
G({\boldsymbol \phi};Y):=\frac{1}{2\gamma^2}\|Y-\mathcal{K}({\boldsymbol \phi})\|^2_{L^2(\Gamma)},
\end{equation}
and
\begin{equation}\label{normalizingconst}
L(Y):=\int_{\mathbb{R}^N} \exp\left(-G({\boldsymbol \phi};Y)) d\mu_0({\boldsymbol \phi}\right),
\end{equation}
is the normalizing constant of $\mu^Y$.


In the rest of this section, we show the well-posedness of the posterior measure following~\cite{stuart2010}. 
We first analyze the forward operator $\mathcal{K}$.
\begin{lemma} (Page 441 of \cite{watson1952book})
Let $z\in \mathbb{C}$ and $Re(z)>0$. Then the following Nicholson's formula holds
\begin{equation}\label{Nicholson}
J_{\nu}^2(z)+Y_{\nu}^2(z)=\frac{8}{\pi^2}\int_0^{\infty}
K_0(2 z \sinh t)\cosh(2\nu t) dt,
\end{equation}
where
\begin{equation*}
K_0(z)=\int_0^{\infty} e^{-z \cosh t} dt,
\end{equation*}
$J_{\nu}(z)$ and $Y_{\nu}(z)$ are the $\nu$-th order Bessel function and Neumann function, respectively.
\end{lemma}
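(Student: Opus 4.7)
The plan is to reduce the identity to an integral representation for the product of Hankel functions. Since $H_\nu^{(1)} = J_\nu + iY_\nu$ and $H_\nu^{(2)} = J_\nu - iY_\nu$ for real $\nu$, one has $J_\nu^2 + Y_\nu^2 = H_\nu^{(1)} H_\nu^{(2)}$ on the positive real axis, and this extends to $\{\mathrm{Re}(z) > 0\}$ by analytic continuation, so it suffices to compute the product on the right in an integral form that manifestly matches \eqref{Nicholson}.

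I would invoke the Schl\"afli--Sommerfeld contour representations
\begin{equation*}
H_\nu^{(1)}(z) = \frac{1}{\pi i}\int_{\mathcal{C}_1} e^{z\sinh w - \nu w}\,dw, \qquad H_\nu^{(2)}(z) = -\frac{1}{\pi i}\int_{\mathcal{C}_2} e^{z\sinh w - \nu w}\,dw,
\end{equation*}
where $\mathcal{C}_1$ and $\mathcal{C}_2$ are the standard loops in the complex $w$-plane chosen so that the representations converge absolutely for $\mathrm{Re}(z) > 0$. The product is then a double integral over $\mathcal{C}_1 \times \mathcal{C}_2$, and the decisive step is the change of variables $w_1 = s+t$, $w_2 = s-t$, which exploits $\sinh w_1 + \sinh w_2 = 2\sinh s\cosh t$ to separate $s$ and $t$. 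After shifting the $s$-contour so that $\sinh s$ acquires the sign required for convergence, the inner $s$-integration is recognized, via the representation $K_0(x) = \int_0^\infty e^{-x\cosh u}\,du$, as $K_0(2z\sinh t)$.

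The residual $t$-integral initially carries a factor $e^{-2\nu t}$ and runs over the full real line; symmetrizing under $t \mapsto -t$ using the reflection symmetry of the combined contour converts $e^{-2\nu t}$ into $2\cosh(2\nu t)$ and folds the domain to $(0, \infty)$, while the Jacobian of the change of variables, the prefactors $\pm 1/(\pi i)$, and the normalization inherent in $K_0$ combine to produce the constant $8/\pi^2$. The principal obstacle is not any single identity but the careful contour analysis: justifying Fubini on the double integral, tracking which branches of $\sinh$ are involved, checking that no singularity of the integrand is crossed when the $s$-contour is shifted, and verifying that the folding argument legitimately yields $\cosh(2\nu t)$ rather than an unsymmetrized exponential. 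Once those bookkeeping issues are settled, \eqref{Nicholson} follows for $\mathrm{Re}(z) > 0$ as asserted.
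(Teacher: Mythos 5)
The paper does not actually prove this lemma: it is quoted with the citation ``Page 441 of Watson'' and used as a black box, so there is no in-paper argument to compare yours against. Your outline is essentially the classical derivation found at the cited location: write $J_\nu^2+Y_\nu^2=H_\nu^{(1)}H_\nu^{(2)}$ (note this is a purely algebraic consequence of $H_\nu^{(1,2)}=J_\nu\pm iY_\nu$, valid wherever all functions are defined, so the analytic-continuation step you invoke is not needed), represent each Hankel function by a Schl\"afli--Sommerfeld contour integral, separate variables by a linear change of variables, recognize the inner integral as $K_0$, and symmetrize to get $\cosh(2\nu t)$. The strategy is the right one.

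Two caveats. First, as written this is a plan rather than a proof: every analytically delicate step (Fubini on the unbounded contours, legitimacy of the contour shift, the folding that yields $\cosh$ rather than a bare exponential, and the assembly of the constant $8/\pi^2$) is named and then deferred, and these are exactly where the work lies. Second, there is an internal inconsistency in your variable bookkeeping. With $w_1=s+t$, $w_2=s-t$ the exponent becomes $z(\sinh w_1+\sinh w_2)-\nu(w_1+w_2)=2z\sinh s\cosh t-2\nu s$, since $w_1+w_2=2s$. Hence the $K_0$ must come from integrating over $t$ (the variable under the $\cosh$), producing $K_0(\mp 2z\sinh s)$ once the $s$-contour is placed where $\mathrm{Re}(z\sinh s)<0$, and the factor $e^{-2\nu s}$ --- and therefore, after symmetrization, $\cosh(2\nu s)$ --- attaches to $s$, not to $t$. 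Your description assigns these roles the other way around (inner $s$-integration giving $K_0(2z\sinh t)$, residual $t$-integral carrying $e^{-2\nu t}$), which does not match the stated substitution; either interchange the labels or take $w_1=t+s$, $w_2=t-s$. This is a fixable slip, but in a computation whose only real content is careful sign and contour tracking, it needs to be fixed before the argument can be checked.
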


Using Nicholson's formula \eqref{Nicholson}, the following property holds for $\mathcal{K}$.
\begin{lemma}\label{thmdirect}
There exist a constant $C$, such that
\begin{equation}\label{direct_theorem1}
\|\mathcal{K}({\boldsymbol \phi})\|_{L^2(\Gamma)}\leqslant C |{\boldsymbol \phi}|_\infty,
\end{equation}
where $C$ depends on $\Gamma$ and $V$.
\end{lemma}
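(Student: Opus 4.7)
The plan is to treat the two source families \eqref{F1} and \eqref{F2} separately, reduce both to pointwise bounds on $y\in\Gamma$, and then integrate over the bounded measurement curve $\Gamma$. The key geometric fact I would use throughout is that $\mathrm{dist}(\Gamma,V)=:d>0$ and $\mathrm{diam}(\Gamma\cup V)=:D<\infty$, so for every admissible source location $z_j\in V$ and every $y\in\Gamma$ the argument $k|y-z_j|$ of the Hankel function lies in the compact interval $[kd,kD]\subset(0,\infty)$, bounded away from the origin singularity.

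For the smooth sources \eqref{F2}, the estimate is essentially immediate. I would bound $|\exp(-\xi_j|x-z_j|^2)|\le 1$ whenever $\mathrm{Re}\,\xi_j\ge 0$ (an assumption consistent with the modelling), use $|\Phi_k(y,x)|=\tfrac14|H_0^{(1)}(k|y-x|)|$, and then dominate the Hankel function on the compact set $\Gamma\times V$ by a constant depending only on $k$, $d$, $D$. This yields
\begin{equation*}
|\mathcal{K}({\boldsymbol\phi})(y)|\;\le\;\Big(\max_j|\lambda_j|\Big)\sum_{j=1}^J\int_V|\Phi_k(y,x)|\,dx\;\le\;C_1\,|{\boldsymbol\phi}|_\infty,
\end{equation*}
and integrating over the bounded curve $\Gamma$ gives the desired $L^2$ bound.

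For the monopole--dipole sources \eqref{F1}, the first step is to interpret $\mathcal{K}$ in the distributional sense: an integration by parts against $\delta$ and its gradient produces
\begin{equation*}
\mathcal{K}({\boldsymbol\phi})(y)=\sum_{j=1}^{J}\bigl(\lambda_j\,\Phi_k(y,z_j)-\xi_j\cdot\nabla_x\Phi_k(y,x)\big|_{x=z_j}\bigr),
\end{equation*}
using the fact that $\Phi_k(y,\cdot)$ is smooth in a neighbourhood of each $z_j\in V$ because $y\in\Gamma$ is separated from $V$. This reduces the task to controlling $|H_0^{(1)}(k|y-z_j|)|$ and $|H_1^{(1)}(k|y-z_j|)|$, where the gradient introduces $H_1^{(1)}$ through the recurrence $\tfrac{d}{dt}H_0^{(1)}(t)=-H_1^{(1)}(t)$. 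This is exactly where I would invoke Nicholson's formula \eqref{Nicholson}: writing $|H_\nu^{(1)}(z)|^2=J_\nu^2(z)+Y_\nu^2(z)$ and applying \eqref{Nicholson} for $\nu=0,1$ expresses these quantities as integrals of $K_0(2z\sinh t)$, which are continuous in $z$ on any interval $[kd,kD]$ (the integrand decays exponentially in $t$ and has no singularity for $z>0$). Consequently there is a constant $M=M(k,d,D)$ with $|H_\nu^{(1)}(k|y-z_j|)|\le M$ uniformly over $y\in\Gamma$ and $z_j\in V$, yielding $|\mathcal{K}({\boldsymbol\phi})(y)|\le C_2\sum_j(|\lambda_j|+|\xi_j|)\le C_2'\,|{\boldsymbol\phi}|_\infty$.

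Finally, squaring, integrating over $\Gamma$ (which has finite length), and combining the two cases delivers \eqref{direct_theorem1} with $C$ depending only on $\Gamma$, $V$, $k$, and $J$. I expect the only subtle step to be the correct distributional reduction of the dipole term and the verification that Nicholson's formula delivers a genuinely uniform bound on the derivative $H_1^{(1)}$; everything else is routine given that $\Gamma$ and $V$ are separated compact sets.
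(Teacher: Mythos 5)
Your proposal is correct and rests on the same core mechanism as the paper's proof: because $\Gamma$ and $V$ are separated, the Hankel kernel is uniformly bounded on $\Gamma\times V$, and Nicholson's formula \eqref{Nicholson} is the tool that controls $|H_\nu^{(1)}|^2=J_\nu^2+Y_\nu^2$. The difference is in how that tool is deployed and in the level of care given to the point sources. The paper uses Nicholson's formula to show that $\tau\mapsto|H_\nu^{(1)}(k\tau)|$ is monotonically decreasing (since $K_0$ is decreasing and $\tau\geq\tau^*:=\mathrm{dist}(\Gamma,V)$), which yields the explicit constant $\tfrac{|V|}{4}|H_0^{(1)}(k_m\tau^*)|$; you use it only to get continuity and hence a uniform bound on the compact interval $[kd,kD]$, which is slightly weaker but entirely sufficient (indeed, for your purposes Nicholson's formula is not really needed --- compactness and smoothness of $H_\nu^{(1)}$ on $(0,\infty)$ already give the bound). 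Where your route genuinely adds value is the case \eqref{F1}: the paper's written proof bounds $|u(x,k)|$ by $\tfrac{|V|}{4}|H_0^{(1)}(k\tau^*)|\,\|F\|_\infty$, which only makes literal sense when $F$ is a bounded density such as \eqref{F2}; for the monopole--dipole sources $\|F\|_\infty$ is undefined, and one must do exactly what you propose --- evaluate $\mathcal{K}$ distributionally as $\sum_j\bigl(\lambda_j\Phi_k(y,z_j)-\xi_j\cdot\nabla_x\Phi_k(y,x)|_{x=z_j}\bigr)$ and control $H_1^{(1)}$ as well as $H_0^{(1)}$ (Nicholson's formula with $\nu=1$, or monotonicity again). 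So your argument covers a case the paper's proof silently skips, at the cost of a less explicit constant. Your caveats (boundedness of $\Gamma$, $\mathrm{Re}\,\xi_j\geq 0$ for the Gaussian sources, and the dependence of $C$ on $k_m$ and $J$ in addition to $\Gamma$ and $V$) apply equally to both proofs.
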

\begin{proof}
According to \eqref{near_field}, 
\begin{equation}\label{thmdirect_1}
u(x,k)=\int_{V} \frac{i}{4} H_0^{(1)} (k|x-y|) F(y) dy.
\end{equation}
Let
\begin{equation*}
\tau^*=\min\{|x-y|: x\in \Gamma, y\in V  \}
\quad \text{and} \quad
\tau=|x-y|.
\end{equation*}
By \eqref{Nicholson}, 
\begin{equation}\label{thmdirect_2}
\begin{split}
|H_{\nu}^{(1)}(k\tau)|^2&=J_{\nu}^2(k\tau)+Y_{\nu}^2(k\tau)\\
&= \frac{8}{\pi^2}\int_0^{\infty}
K_0(2k\tau \sinh t)\cosh(2\nu t)dt\\
&\leqslant \frac{8}{\pi^2}\int_0^{\infty}
K_0(2k\tau^* \sinh t)\cosh(2\nu t)dt\\
&=|H_{\nu}^{(1)}(k\tau^*)|^2.
\end{split}
\end{equation}
Combining \eqref{thmdirect_1} and \eqref{thmdirect_2}, we obtain that
\begin{equation}\label{thmdirect_3}
\begin{split}
|u(x,k)|&=\left|\int_{V} \frac{i}{4}
 H_0^{(1)} (k|x-y|) F(y) dy\right|\\
&\leqslant\frac{1}{4}\int_{V}
| H_0^{(1)} (k\tau^*)| |F(y)| dy \\
&\leqslant\frac{|V|}{4}
| H_0^{(1)} (k\tau^*)| \|F(y)\|_{\infty}.
\end{split}
\end{equation}
Since $k\geqslant k_m$, it holds that
\begin{equation}\label{thmdirect_4}
\|u(x,k)\|_{L^2(\Gamma)}\leqslant\frac{|V|}{4}
| H_0^{(1)} (k_m\tau^*)| \|F(y)\|_{\infty} \leqslant C\frac{|V|}{4}
| H_0^{(1)} (k_m\tau^*)| |{\boldsymbol \phi}|_{\infty}.
\end{equation}
This completes the proof.
\end{proof}

\begin{corollary}
There exists a constant $C$, such that
\begin{equation}\label{direct_theorem1}
\|\mathcal{K}({\boldsymbol \phi}_1)-\mathcal{K}({\boldsymbol \phi}_2)\|_{L^2(\Gamma)}\leqslant
C |{\boldsymbol \phi}_1-{\boldsymbol \phi}_2|_\infty,
\end{equation}
where $C$ depends on $\Gamma$ and $V$.
\end{corollary}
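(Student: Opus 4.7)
The plan is to reduce the estimate to Lemma~\ref{thmdirect} by separating the dependence of $\mathcal{K}$ on the linear parameters $({\boldsymbol\lambda},{\boldsymbol\xi})$ from the nonlinear dependence on the source locations $\mathbf{z}=(z_1,\ldots,z_J)$. First, I would insert an intermediate point and write
\[
\mathcal{K}({\boldsymbol\phi}_1)-\mathcal{K}({\boldsymbol\phi}_2)
=\bigl[\mathcal{K}({\boldsymbol\lambda}_1,{\boldsymbol\xi}_1,\mathbf{z}_1)-\mathcal{K}({\boldsymbol\lambda}_2,{\boldsymbol\xi}_2,\mathbf{z}_1)\bigr]
+\bigl[\mathcal{K}({\boldsymbol\lambda}_2,{\boldsymbol\xi}_2,\mathbf{z}_1)-\mathcal{K}({\boldsymbol\lambda}_2,{\boldsymbol\xi}_2,\mathbf{z}_2)\bigr].
\]
With $\mathbf{z}_1$ held fixed, $\mathcal{K}$ is linear in $({\boldsymbol\lambda},{\boldsymbol\xi})$ for both source models \eqref{F1} and \eqref{F2}, so the first bracket equals $\mathcal{K}({\boldsymbol\lambda}_1-{\boldsymbol\lambda}_2,{\boldsymbol\xi}_1-{\boldsymbol\xi}_2,\mathbf{z}_1)$, and Lemma~\ref{thmdirect} applied directly yields a bound by $C\,|{\boldsymbol\phi}_1-{\boldsymbol\phi}_2|_\infty$.

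For the second bracket, in which only the source locations vary, I would apply the fundamental theorem of calculus along the segment from $z_{2,j}$ to $z_{1,j}$ to each source term. Carrying out the $x$-integration against $\delta(x-z_j)$ (or against the smooth Gaussian factor) shows that this bracket is a sum of integrals of $\nabla_z\Phi_k(y,z)$, and of second-order $z$-derivatives of $\Phi_k$ in the dipole term of \eqref{F1}, each multiplied by a component of $z_{1,j}-z_{2,j}$. These $z$-derivatives are, up to constants, Hankel functions $H_1^{(1)}(k|y-z|)$ and $H_2^{(1)}(k|y-z|)$; since $\mathrm{dist}(\Gamma,V)=:\tau^\ast>0$, Nicholson's formula \eqref{Nicholson} applies exactly as in the proof of Lemma~\ref{thmdirect} to give $|H_\nu^{(1)}(k|y-z|)|\le|H_\nu^{(1)}(k_m\tau^\ast)|$ uniformly in $y\in\Gamma$ and in $z$ on the segment. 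Integrating over $\Gamma$ and combining with the first bracket via the triangle inequality closes the estimate.

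The main obstacle is the second bracket, since the statement of Lemma~\ref{thmdirect} does not encompass the nonlinear dependence of $\mathcal{K}$ on $\mathbf{z}$. Resolving it requires (i) commuting the $z$-derivative with the integral defining $\mathcal{K}$, which is legitimate because the kernel is smooth away from the diagonal and $\Gamma\cap V=\emptyset$, and (ii) extending the Nicholson-based majorization beyond $H_0^{(1)}$ to $H_1^{(1)}$ and $H_2^{(1)}$. Both steps are routine given the strict separation of $\Gamma$ and $V$, and the resulting constant $C$ depends only on $\Gamma$, $V$, and the lower bound $k_m$ on the wave number.
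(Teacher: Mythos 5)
The paper states this corollary with no proof at all, so there is nothing to compare your argument against; judged on its own merits, your decomposition-plus-FTC strategy with the Nicholson majorization extended to $H_1^{(1)}$ and $H_2^{(1)}$ is the natural way to fill that gap, but it has two genuine defects. First, your claim that $\mathcal{K}$ is linear in $({\boldsymbol\lambda},{\boldsymbol\xi})$ for \emph{both} source models is false for \eqref{F2}: there $\xi_j$ sits inside the exponent, $\lambda_j\exp(-\xi_j|x-z_j|^2)$, so the first bracket is not $\mathcal{K}({\boldsymbol\lambda}_1-{\boldsymbol\lambda}_2,{\boldsymbol\xi}_1-{\boldsymbol\xi}_2,\mathbf{z}_1)$ and Lemma~\ref{thmdirect} cannot be invoked as stated; you need a separate mean-value estimate $|e^{-at}-e^{-bt}|\le t|a-b|$ (valid for $a,b\ge 0$, $t\ge 0$, with $t=|x-z_j|^2$ bounded on $V$), which works but introduces a factor $|\lambda_j|\sup_{x,z}|x-z|^2$. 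Relatedly, even for \eqref{F1} the lemma as \emph{stated} bounds $\|\mathcal{K}(\cdot)\|$ by the sup-norm of the \emph{entire} parameter vector, including the locations, so applying it to $({\boldsymbol\lambda}_1-{\boldsymbol\lambda}_2,{\boldsymbol\xi}_1-{\boldsymbol\xi}_2,\mathbf{z}_1)$ formally yields $C\max\{|{\boldsymbol\lambda}_1-{\boldsymbol\lambda}_2|_\infty,|{\boldsymbol\xi}_1-{\boldsymbol\xi}_2|_\infty,|\mathbf{z}_1|_\infty\}$; you must reopen the lemma's proof to see that only the intensities actually enter.

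Second, your closing claim that the resulting constant depends only on $\Gamma$, $V$ and $k_m$ cannot be right, and this is not cosmetic. The $z$-derivative of $\mathcal{K}$ is proportional to the intensities: for \eqref{F1} the FTC term carries factors $\lambda_j$ and $\xi_j$ multiplying $H_1^{(1)}$ and $H_2^{(1)}$, and for \eqref{F2} it carries $2\lambda_j\xi_j(x-z_j)$. Letting $|\lambda_j|\to\infty$ with a fixed displacement of $z_j$ shows the map is not globally Lipschitz with a constant independent of the parameter magnitudes; the estimate \eqref{direct_theorem1} can only hold with $C$ depending additionally on a bound $|{\boldsymbol\phi}_i|_\infty\le R$ (a local Lipschitz estimate, which is in fact all the well-posedness theorem requires, where constants are already allowed to depend on $r$). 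You should state and track that dependence explicitly rather than assert its absence. The remaining ingredients --- differentiating under the integral sign away from the diagonal, the monotonicity of $K_0$ giving $|H_\nu^{(1)}(k\tau)|\le|H_\nu^{(1)}(k_m\tau^\ast)|$ for $\tau\ge\tau^\ast$ and $k\ge k_m$ via \eqref{Nicholson}, and keeping the segment from $z_{2,j}$ to $z_{1,j}$ at positive distance from $\Gamma$ --- are sound as you describe them.
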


Let $\mu_1$ and $\mu_2$ denote two probability measures. Assume that $\mu_1$ and $\mu_2$ are both absolutely continuous with respect to a third measure $\mu$.

\begin{definition}
The Hellinger and total variation metrics between $\mu_1$ and $\mu_2$ are, respectively, defined as
\begin{equation}\label{hellinger}
d_H(\mu_1,\mu_2)=\left(  \frac{1}{2} \int
\left(  \sqrt{d\mu_1/d\mu} -\sqrt{d\mu_2/d\mu}  \right)^2 d\mu \right)^{1/2},
\end{equation}
and
\begin{equation}\label{tv}
d_{TV}(\mu_1,\mu_2)= \frac{1}{2} \int
\left|  {d\mu_1/d\mu} -{d\mu_2/d\mu}  \right| d\mu.
\end{equation}
\end{definition}

Both Hellinger and total variation metrics are independent of the choice of the measure $\mu$. 
If $\mu_1$ and $\mu_2$ are both absolutely continuous with respect to $\mu$, then
Hellinger and total variation metrics are equivalent (see, e.g., Lemma 6.36 of \cite{stuart2010}).

\begin{theorem}
Assume that $\mu_0$ is a Borel probability measure on $\mathbb{R}^N$. Let $\mu^{Y}$ and  
$\mu^{Y'}$ be the measures defined by \eqref{Bayesrule} for $Y$ and $Y' \in \mathbb{C}$, respectively. Suppose $\mu^{Y}$ and $\mu^{Y'}$ 
are both absolutely continuous with respect to $\mu_{0}$. Then the Bayesian inverse problem \eqref{Bayesrule} is well-posed in both 
Hellinger and total variational metrics, i.e.,  there exist a constant $M=M(r)>0$ with $\max\{|Y|,|Y'|\}< r$, such that
\begin{equation}\label{wellposed}
d_H(\mu^{Y},\mu^{Y'})\leq M\|Y-Y'\|_\infty \quad \text{and} \quad
d_{TV}(\mu^{Y},\mu^{Y'})\leq M\|Y-Y'\|_{\infty}.
\end{equation}
\end{theorem}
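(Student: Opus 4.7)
The plan is to follow the standard Stuart-type well-posedness argument, where the two key inputs are (i) appropriate continuity and boundedness of the potential $G(\boldsymbol{\phi};Y)$ as a function of the data $Y$ and (ii) a uniform lower bound on the normalizing constant $L(Y)$ over the data ball $\{\|Y\|\le r\}$. Both inputs are driven by the forward-map estimate $\|\mathcal{K}(\boldsymbol{\phi}_1)-\mathcal{K}(\boldsymbol{\phi}_2)\|_{L^2(\Gamma)}\le C|\boldsymbol{\phi}_1-\boldsymbol{\phi}_2|_\infty$ from the preceding corollary together with the absolute bound $\|\mathcal{K}(\boldsymbol{\phi})\|_{L^2(\Gamma)}\le C|\boldsymbol{\phi}|_\infty$ from Lemma~\ref{thmdirect}. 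With these in hand I will first verify that $G$ is nonnegative and, by expanding the square in \eqref{likelipptential}, obtain a Lipschitz-in-data estimate of the form
\begin{equation*}
|G(\boldsymbol{\phi};Y)-G(\boldsymbol{\phi};Y')|\le \frac{1}{2\gamma^2}\bigl(\|Y\|+\|Y'\|+2\|\mathcal{K}(\boldsymbol{\phi})\|\bigr)\,\|Y-Y'\|_{L^2(\Gamma)},
\end{equation*}
so that for $\max\{\|Y\|,\|Y'\|\}<r$ the difference is bounded by a constant depending on $r$ and $|\boldsymbol{\phi}|_\infty$ times $\|Y-Y'\|_\infty$.

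Next I will control the normalizing constant. Since $G\ge 0$ and $\mu_0$ is a probability measure, $L(Y)\le 1$ is immediate. For the lower bound, I would choose a bounded ball $B\subset\mathbb{R}^N$ with $\mu_0(B)>0$ (such a ball exists because $\mu_0$ is a Borel probability measure) and apply Lemma~\ref{thmdirect} on $B$ to obtain a uniform bound $\|\mathcal{K}(\boldsymbol{\phi})\|_{L^2(\Gamma)}\le C_B$ for all $\boldsymbol{\phi}\in B$. This yields $G(\boldsymbol{\phi};Y)\le C(r,B)$ on $B$ for every $\|Y\|<r$, and hence
\begin{equation*}
L(Y)\ge \int_B \exp(-G(\boldsymbol{\phi};Y))\,d\mu_0(\boldsymbol{\phi})\ge \mu_0(B)\,e^{-C(r,B)}=:L_*(r)>0.
\end{equation*}

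With these two ingredients secured, I will bound the Hellinger distance by splitting in the usual way:
\begin{align*}
2d_H^2(\mu^Y,\mu^{Y'})&=\int\Bigl(L(Y)^{-1/2}e^{-G(\boldsymbol{\phi};Y)/2}-L(Y')^{-1/2}e^{-G(\boldsymbol{\phi};Y')/2}\Bigr)^2 d\mu_0\\
&\le \frac{2}{L(Y)}\int\bigl(e^{-G/2}-e^{-G'/2}\bigr)^2 d\mu_0+2\bigl(L(Y)^{-1/2}-L(Y')^{-1/2}\bigr)^2.
\end{align*}
The first term is handled with $|e^{-a/2}-e^{-b/2}|\le \tfrac{1}{2}|a-b|$, the Lipschitz-in-$Y$ estimate on $G$, and integrability of $|\boldsymbol{\phi}|_\infty^2$ against $\mu_0$ (which follows because $\mu_0$ is a Borel probability measure, after an additional truncation/tail argument if second moments are not assumed; otherwise one uses a Gaussian-type prior where moments are automatic). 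For the second term, $|L(Y)-L(Y')|$ is controlled by the same Lipschitz estimate, and the uniform lower bound $L_*(r)$ converts this into a bound by $\|Y-Y'\|_\infty$. Collecting the pieces gives the Hellinger estimate with some $M=M(r)$. The total variation estimate then follows from the equivalence of $d_H$ and $d_{TV}$ quoted after \eqref{tv} (Lemma~6.36 of~\cite{stuart2010}), which yields $d_{TV}\le \sqrt{2}\,d_H$, and so the same $M$ (up to a constant) works.

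The main obstacle I anticipate is the uniform lower bound on $L(Y)$: the argument above requires controlling the tails of $\mu_0$ so that $\exp(-G(\boldsymbol{\phi};Y))$ is not merely pointwise small but genuinely integrable against $\mu_0$ with enough mass on a region where $\mathcal{K}(\boldsymbol{\phi})$ is moderate. If $\mu_0$ only has finite second moments (rather than being compactly supported), one must be careful in exchanging the integral and the bound on $|\boldsymbol{\phi}|_\infty$, because Lemma~\ref{thmdirect} gives $\|\mathcal{K}(\boldsymbol{\phi})\|_{L^2(\Gamma)}\le C|\boldsymbol{\phi}|_\infty$ with no decay. Working on a ball $B$ as above circumvents this difficulty and delivers the required constants depending only on $r$, $\Gamma$, $V$, and $\mu_0$.
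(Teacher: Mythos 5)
Your proposal is correct and follows essentially the same route as the paper: the same splitting of $d_H^2$ into a term controlled by the Lipschitz dependence of $G$ on $Y$ and a term controlled by $|L(Y)^{-1/2}-L(Y')^{-1/2}|$, the same lower bound on $L(Y)$ via a ball of positive prior measure on which $\mathcal{K}$ is uniformly bounded, and the same appeal to Lemma~6.36 of \cite{stuart2010} for the total variation bound. If anything, you are slightly more careful than the paper in flagging that the first Hellinger term requires integrability of $|\boldsymbol{\phi}|_\infty^2$ against $\mu_0$ (a moment or truncation condition the paper's estimate \eqref{thmwellposed5} implicitly assumes).
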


\begin{proof}
Define the normalizing constant
\begin{equation}\label{LY}
L(Y)=\int_{\mathbb{R}^N} \exp\left(-\frac{1}{2\gamma^2}\|Y-\mathcal{K}({\boldsymbol \phi})\|_{L^2({\Gamma})}^2\right) d\mu_0({\boldsymbol \phi}).
\end{equation}
Clearly,
$$0\leq L(Y)\leq 1.$$
Now we show that $L(Y)$ is strictly positive.
From Lemma \ref{thmdirect} and (\ref{LY}), it follows that
\begin{equation}\label{thmwellposed2}
\begin{split}
L(Y)&\geq \int_{B(R)} \exp\left(-\frac{1}{\gamma^2}(\|Y\|_{L^2({\Gamma})}^2+\|\mathcal{K}({\boldsymbol \phi})\|_{L^2({\Gamma})}^2\right) d\mu_0({\boldsymbol \phi})\\
&\geq\int_{B(R)} \exp(-M) d\mu_0({\boldsymbol \phi}) \\&=
\exp(-M) \mu_0(B(R) ),
\end{split}
\end{equation}
where $B(R)$ denotes a ball with a large enough radius $R$.

We conclude that $\mu_0(B(R) )>0$. To see this, consider the disjoint sets $A_n:=\{u|\, n-1\leq \|u\|< n\}$, $\forall n\in \mathbb{N} $.
Then $\{A_n\}$ are measurable and
$\sum_{n=1}^{\infty}\mu_0(A_n)=
\mu_0(\bigcup_{n=1}^{\infty} A_n)=1$. Consequently, there exist at least one of
$A_n \in \{A_n\}_{n=1}^{\infty}$
satisfies $\mu_0(A_n)\neq 0$. Combining with \eqref{thmwellposed2}, it holds that $L(Y)>0$.

Using the mean value theorem and Lemma~\ref{thmdirect}, we have
\begin{equation}\label{thmwellposed3}
\begin{split}
|L(Y)-L(Y')|&\leq \int_{\mathbb{R}^N}
\exp(-G({\boldsymbol \phi};Y))\left|G({\boldsymbol \phi};Y)-G({\boldsymbol \phi};Y')  \right|d\mu_0({\boldsymbol \phi})\\
&\leq \int_{\mathbb{R}^N} \frac{1}{2\gamma^2}
\left| \|Y-\mathcal{K}({\boldsymbol \phi})\|_{L^2(\Gamma)}^2-
\|Y'-\mathcal{K}({\boldsymbol \phi})\|_{L^2(\Gamma)}^2\right| d\mu_0({\boldsymbol \phi})\\
&\leq M\|Y-Y'\|_{\infty}.
\end{split}
\end{equation}
By the definition of $d_H$,
\begin{equation}\label{thmwellposed4}
\begin{split}
&\quad d_H^2(\mu^Y,\mu^{Y'})\\
&=\frac{1}{2} \int_{\mathbb{R}^N}\left\{
\left(\frac{\exp(-G({\boldsymbol \phi};Y))}{L(Y)}\right)^{1/2}-
\left(\frac{\exp(-G({\boldsymbol \phi};Y'))}{L(Y')}\right)^{1/2}
\right\}^2 d\mu_0({\boldsymbol \phi})\\
&=\frac{1}{2} \int_{\mathbb{R}^N}\left\{
\left(\frac{\exp(-G({\boldsymbol \phi};Y))}{L(Y)}\right)^{1/2}-
\left(\frac{\exp(-G({\boldsymbol \phi};Y'))}{L(Y)}\right)^{1/2}
\right. \\
&\qquad\quad\quad+ \left.
\left(\frac{\exp(-G({\boldsymbol \phi};Y'))}{L(Y)}\right)^{1/2}-
\left(\frac{\exp(-G({\boldsymbol \phi};Y'))}{L(Y')}\right)^{1/2}
\right\}^2 d\mu_0({\boldsymbol \phi})\\
&\leq L(Y)^{-1}\int_{\mathbb{R}^N}\left\{
{\exp\left(-\frac{1}{2}G({\boldsymbol \phi};Y)\right)}-
{\exp\left(-\frac{1}{2}G({\boldsymbol \phi};Y')\right)}
\right\}^2 d\mu_0({\boldsymbol \phi})\\
&\qquad\quad\quad +
\left|L(Y)^{-1/2}-L(Y')^{-1/2}\right|^2
\int_{\mathbb{R}^N}{\exp\left(-G({\boldsymbol \phi};Y')\right)}
d\mu_0({\boldsymbol \phi}).
\end{split}
\end{equation}
Again, using the mean value theorem and Lemma \ref{thmdirect}, we have that
\begin{equation}\label{thmwellposed5}
\begin{split}
&\quad \int_{\mathbb{R}^N}\left\{
{\exp\left(-\frac{1}{2}G({\boldsymbol \phi};Y)\right)}-
{\exp\left(-\frac{1}{2}G({\boldsymbol \phi};Y')\right)}
\right\}^2 d\mu_0({\boldsymbol \phi})\\
&\leq \int_{\mathbb{R}^N}\exp(-G({\boldsymbol \phi};Y))
\left|\frac{1}{2}G({\boldsymbol \phi};Y)-\frac{1}{2}G({\boldsymbol \phi};Y')  \right|^2d\mu_0({\boldsymbol \phi})\\
&\leq \int_{\mathbb{R}^N} \frac{1}{16\gamma^4}
\left| \|Y-\mathcal{K}({\boldsymbol \phi})\|^2-
\|Y'-\mathcal{K}({\boldsymbol \phi})\|^2\right|^2 d\mu_0({\boldsymbol \phi})\\
&\leq M\|Y-Y'\|_{\infty}^2.
\end{split}
\end{equation}
According to the boundedness of $L(Y)$ and $L(Y')$, it holds that
\begin{equation}\label{thmwellposed6}
\begin{split}
&\quad \left|L(Y)^{-1/2}-L(Y')^{-1/2}\right|^2\\
&\leq M \max\left( L(Y)^{-3},L(Y')^{-3} \right)
\left| L(Y)-L(Y') \right|^2\\
&\leq M \|Y-Y'\|_{\infty}^2.
\end{split}
\end{equation}
Combining (\ref{thmwellposed2})-(\ref{thmwellposed6}) we obtain
\begin{equation*}
d_H(\mu^{Y},\mu^{Y'})\leq M\|Y-Y'\|_\infty.
\end{equation*}
Due to Lemma 6.36 of \cite{stuart2010}, it also holds that
\begin{equation*}
d_{TV}(\mu^{Y},\mu^{Y'})\leq M\|Y-Y'\|_{\infty}.
\end{equation*}
The proof is complete.
\end{proof}

\vspace{1ex}
The solution to the Bayesian inverse problem is the posterior distribution $\mu^Y$. 
To explore  $\mu^Y$, we resort to the Markov Chain Monte Carlo (MCMC) method. 
The basic idea is to construct an ergodic Markov chain with $\mu^Y$ as the stationary distribution. 
Based on the samples generated by MCMC, various statistical estimates
such as maximum a posteriori (MAP) and conditional mean (CM) can be obtained.
In this paper, we employ the preconditioned Crank-Nicolson (pCN) Metropolis-Hastings (MH) algorithm for MCMC \cite{Cotter2013}.

\vspace{1.5ex}
{\bf pCN-MH for ISP}
\begin{itemize}
  \item[1.] Set $n \leftarrow 0$ and choose an initial value ${\boldsymbol \phi}_0$.
  \item[2.] Propose a move according to
  \begin{eqnarray*}
  \tilde{\lambda}_{j, n}&=&\left(1-{\beta}^2\right)^{1/2}{\lambda}_{j, n}+\beta W_n,\quad W_n\sim \mathcal{N}(0,1),\\
   \tilde{\xi}_{j, n}&=&\left(1-{\beta}^2\right)^{1/2}{\xi}_{j, n}+\beta W_n,\quad W_n\sim \mathcal{N}(0,1)  (\text{or } \mathcal{N}({\boldsymbol 0},I_{2\times 2})) \\
     \tilde{z}_{j, n}&=& z^{DSM}+ \sqrt{\sigma} W_n,\quad W_n\sim \mathcal{N}(z_j^{DSM},I_{2\times 2}).
  \end{eqnarray*}
  \item[3.] Compute
  \begin{equation*}
  \alpha({\boldsymbol \phi}_n,\tilde{\boldsymbol \phi}_n)=\min \left\{1, \exp\left(G\big(F(x;{\boldsymbol \phi}_n);Y\big)-
   G(F(x;\tilde{\boldsymbol \phi}_n);Y\big) \right) \right\}.
  \end{equation*}
  \item[4.] Draw $\tilde{\alpha}\sim \mathcal{U}(0,1)$. If  $\alpha({\boldsymbol \phi}_n,\tilde{{\boldsymbol \phi}}_n)\geq \tilde{\alpha}$,
      set ${\boldsymbol \phi}_{n+1}=\tilde{\boldsymbol \phi}_n$.  Else, ${\boldsymbol \phi}_{n+1}={{\boldsymbol \phi}_n}$.
  \item[5.] When $n=\text{MaxIt}$, the maximum sample size, stop. \\
  	Otherwise, set $n \leftarrow n+1$ and go to step 2.
\end{itemize}

\section{Numerical Examples}
We present four numerical examples to show the performance of the proposed quality-Bayesian approach. Let $V=[-4,4]\times[-4,4]$. 
For point sources \eqref{F1}, the solution to the forward problem \eqref{Helmholtz1} is computed directly using the formula in Section 4.1 of \cite{zhang2019}.
For extended sources \eqref{F2},  the solution to \eqref{Helmholtz1} is approximated using \eqref{near_field} or \eqref{far_field} as follows.
Generate a triangular mesh ${\mathcal T}$ for $V$ with mesh size $h$.
For example, for $ \hat{x}:=(\cos\theta,\sin\theta)$, $\theta \in [0,2\pi)$, and a fixed wave number $k$,
 the far-field data is approximated by
 \begin{equation}\label{uinftyh}
 u^{\infty}(\hat{x}; k) \approx \sum_{T \in {\mathcal T}} e^{-ik \hat{x} \cdot {y_T}} F(y_T) |T|,
 \end{equation}
 where $T \in {\mathcal T}$ is a triangle, $y_T$ is the center of $T$, and $|T|$ denotes the area of $T$.
The synthetic data is computed using a mesh with $h \approx 0.06$. 
Then 5\% random noise is added
\begin{equation*}
u^{m}(x,k):=u(x,k)+0.05 (\tilde{Z}_1+i\tilde{Z}_2)  \max |u|,
\end{equation*}
where $\tilde{Z}_1,\tilde{Z}_2\sim\mathcal{N}(0,1)$.
Note that, in the sampling stage, a different relatively coarse mesh is used to compute the solution of the forward problem \eqref{Helmholtz1}.

Examples 1 and 3 use the near field data, which are measured on a circle with radius $R=6.5$ and centered at zero. 
Examples 2 and 4 use the far-field data. 
Define
\[
S_1:=[0,2\pi), \quad S_2:=[0,\pi), \quad S_3:=[\pi/8, 5\pi/8).
\]
The near-field data are
\[
\{u(x; k_j)| x=(R\cos \theta, R\sin \theta), \theta \in S_i, i=1,2,3,  j=1, \ldots, N_k.\}
\]
The far-field data are 
\[
\{ u^\infty(\hat{x}; k_j)| \hat{x}=(\cos \theta, \sin \theta), \theta \in S_i, i=1,2,3,  j=1, \ldots, N_k.\}
\]
The aperture $S_1$ represents complete data and $S_2, S_3$ represent partial data.
Equally spaced measurement angles are used: $80$ for $S_1$, $40$ for $S_2$ and $20$ for $S_3$.
The $N_k$ wave numbers in $[k_m , k_M]$ are given by
\begin{equation}\label{ki}
k_j=k_m+(j-1)(k_M-k_m)/(N_k-1),\quad j=1,\cdots,N_k.
\end{equation}

The sampling domain for DSM is chosen to be the same as $V$ and is divided into $201\times 201$ uniformly distributed sampling points. 

\subsection{Example 1} Let $F(x)=\sum_{j=1}^{3}\lambda_{j}\delta(x-z_{j})$ with
\[
\{\lambda_{j}\}_{j=1}^3=\{6,5,7\},\quad \{z_{j}\}_{j=1}^3=\{(2,2),(-2,2),(0,-2)\}.
\]
Let $k_m=5$, $k_M=10$ and $N_k=10$.
In Fig.~\ref{Fig1}, we plot the indicator functions using data associated with $S_1, S_2, S_3$ by DSM. 
\begin{figure}[h!]
\begin{center}
\begin{tabular}{lll}
\resizebox{0.33\textwidth}{!}{\includegraphics{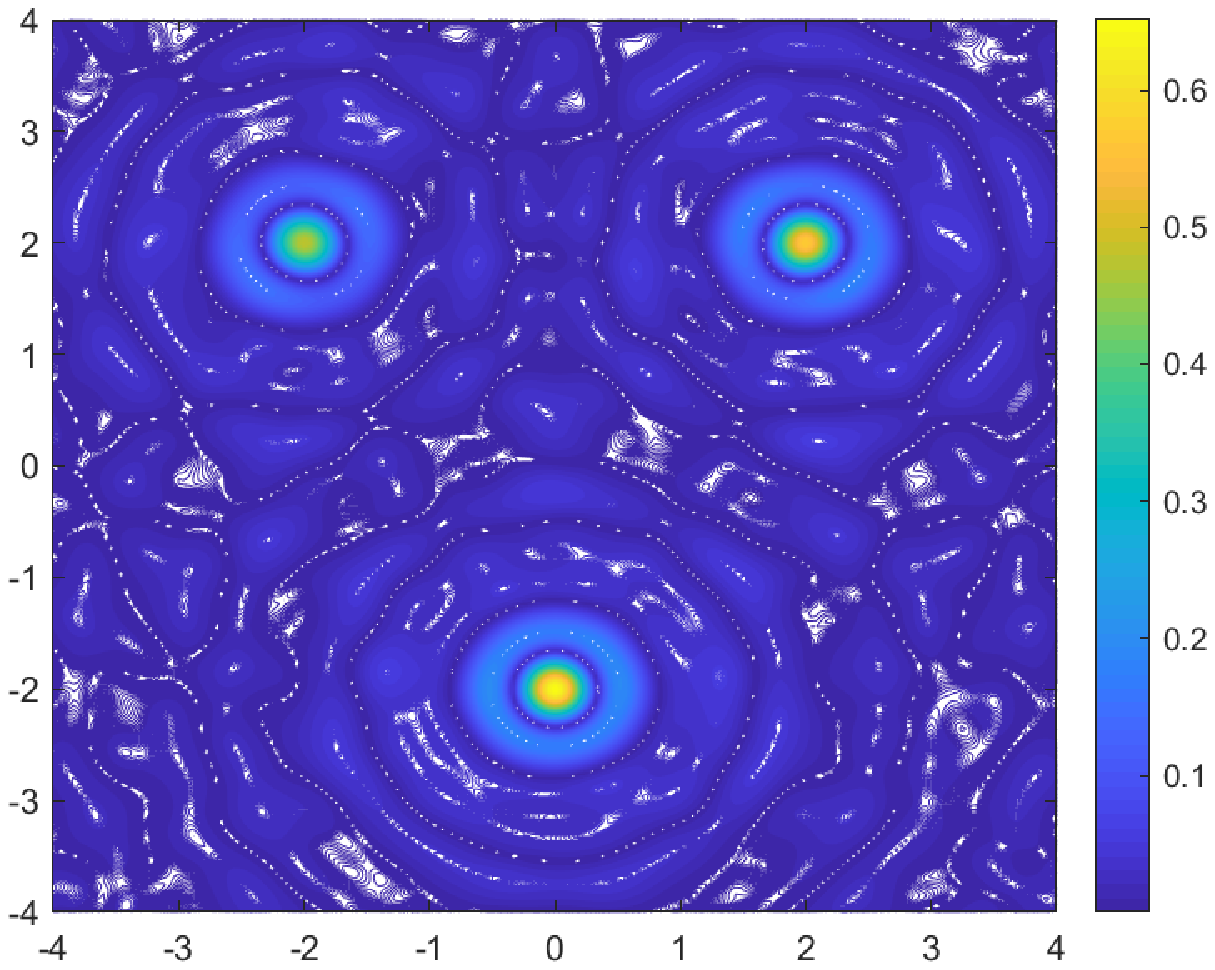}}&
\resizebox{0.33\textwidth}{!}{\includegraphics{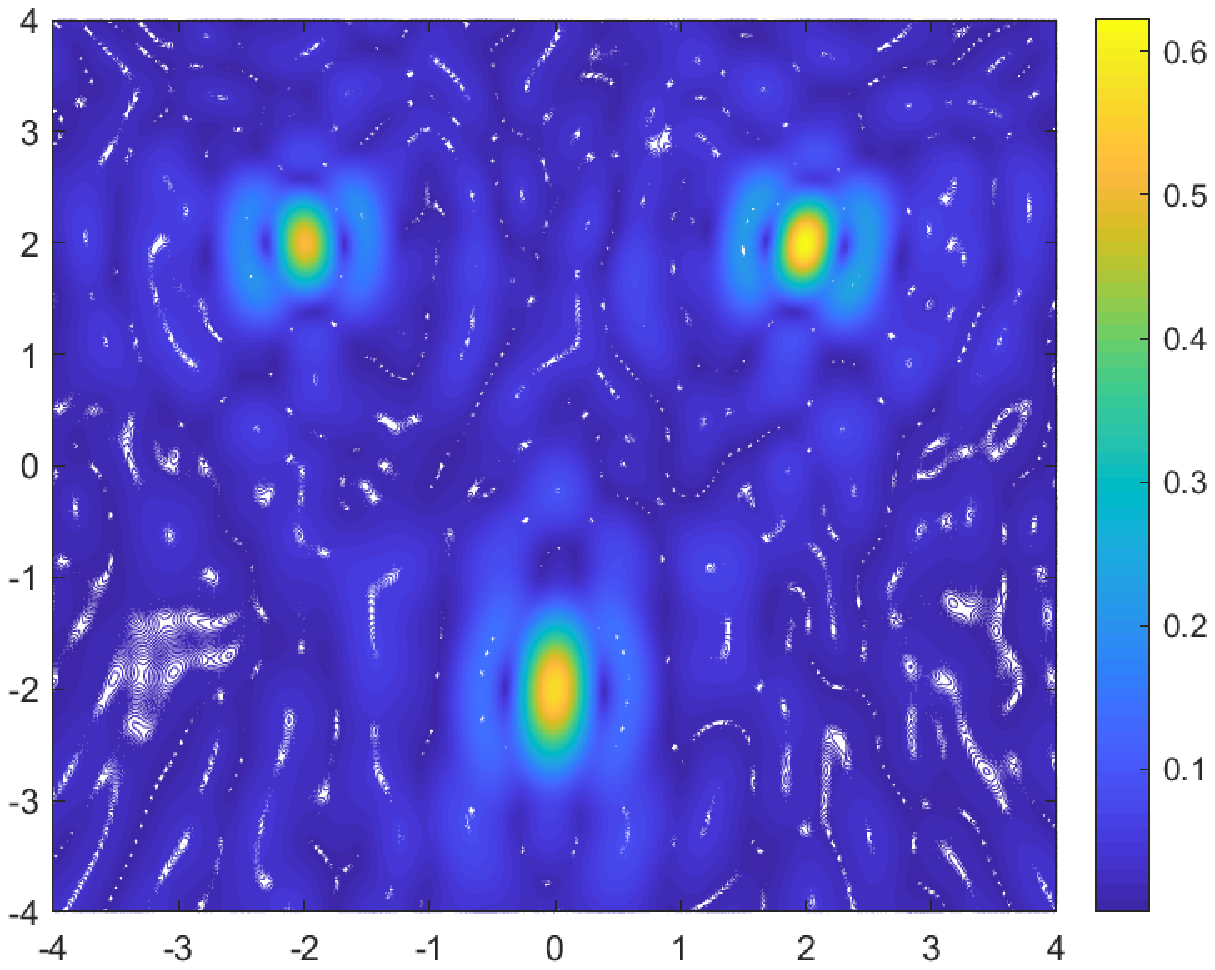}}&
\resizebox{0.33\textwidth}{!}{\includegraphics{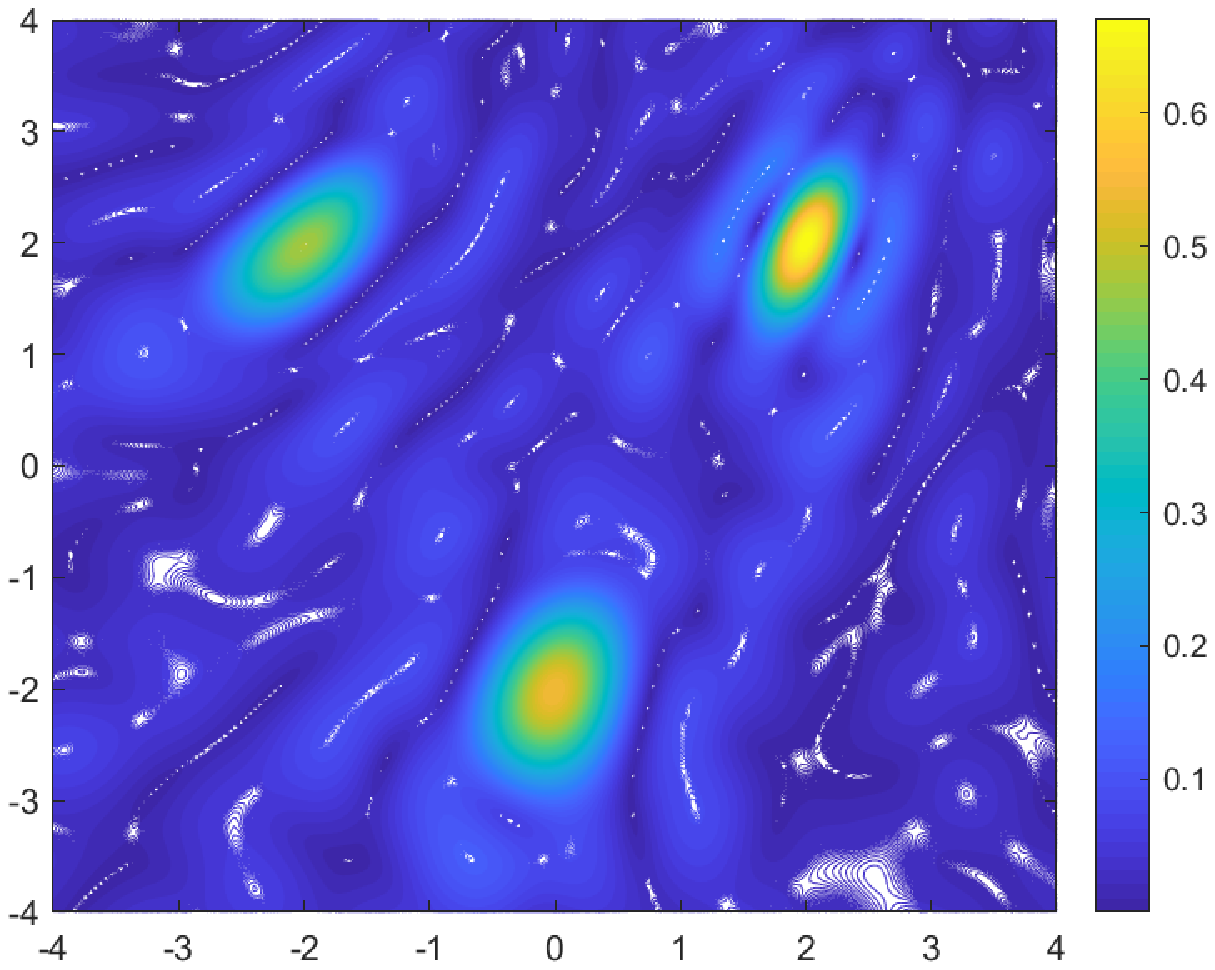}}
\end{tabular}
\end{center}
\caption{Plots of the indicators of the DSM. Left: $S_1$. 
Middle: $S_2$. Right: $S_3$.}
\label{Fig1}
\end{figure}
The reconstructed locations $\{ z_j^{DSM} \}_{j=1}^3$ are
\begin{eqnarray*}
&& S_1: \{(2.00,2.00),(-2.00,2.00),(0.00,-2.00)\}, \\
&& S_2: \{(2.00,2.00),(-2.00,2.00),(0.00,-2.00)\}, \\
&& S_3: \{(2.00,2.00),(-2.00,2.00),(0.00,-2.00)\}.
\end{eqnarray*}
For monopole sources, the locations are accurate even for partial data, although the plots of the indicators in Fig.~\ref{Fig1} are quite different.

In the Bayesian inversion stage, we take $\beta=0.03$ and $\sigma=0.0004$. 
The maximum number of samples is set to be 10000. 
The first 3000 samples are discarded and the condition mean of posterior density for each parameter is computed. 
The results are shown in Table~\ref{e1t1}, where $\cdot^\star$ represents the condition mean.
The reconstructed parameters are close to the exact ones.
\begin{table}[ht]
\small
    \centering
{   \begin{tabular}{ l  c  c  c  c  }
 \Xhline{1pt}
  \multirow{2}{*}{$j$ } & \multicolumn{2}{c}{Exact Parameters} & \multicolumn{2}{c}{Reconstructed parameters for $S_1$}  \\  
  \cmidrule(lr){2-3}  \cmidrule(lr){4-5}
 & $\lambda_{j}$ & $z_{j}$ & $\lambda_{j}^{\star}$ & $z_{j}^{\star}$ \\
 \Xhline{0.8pt}
1 & 6 & (2, 2) & 5.8528 & (1.9980, 1.9967 ) \\
  2 & 5 & (-2, 2) & 5.0681  & (-2.0026, 2.0040) \\
  3 & 7 & (0, -2) & 6.9588 & (-0.0003, -1.9982) \\
 \Xhline{1pt}
  \multirow{2}{*}{j } & \multicolumn{2}{c}{Reconstructed parameters for $S_2$} & \multicolumn{2}{c}{Reconstructed parameters for $S_3$}  \\  
  \cmidrule(lr){2-3}  \cmidrule(lr){4-5}
 & $\lambda_{j}^{\star}$ &  $z_{j}^{\star}$ & $\lambda_{j}^{\star}$ &  $z_{j}^{\star}$ \\
 \Xhline{0.8pt}
  1 & 5.9798 & (2.0000, 1.9954) & 6.2389  & (2.0103, 1.9990) \\
  2 & 5.0730 & (-2.0017, 2.0087) & 4.8071 & (-1.9949, 1.9938) \\
  3 &  6.9029 & (-0.0100, -1.9934) &  6.9066 & (0.0259, -2.0054) \\
 \Xhline{1pt}
\end{tabular} }
\caption{Exact and reconstructed parameters for Example 1.} 
\label{e1t1}
\end{table}

\subsection{Example 2} Let $F(x)=\sum_{j=1}^{3}(\lambda_{j}+\xi_{j}\cdot\nabla)\delta(x-z_{j})$ with
\begin{eqnarray*}
&&\{\lambda_{j}\}_{j=1}^3=\{0,9,0\}, \\ 
&&\{\xi_{j}\}_{j=1}^3=\{(\sqrt{2},-\sqrt{2}),(0,0),(2,0)\}, \\
&&\{z_{j}\}_{j=1}^3=\{(2,0),(-2,2),(-2,-2)\}.
\end{eqnarray*}
Again, let $k_m=5$, $k_M=10$ and $N_k=10$.
In Fig.~\ref{Fig2}, we plot the indicator functions for $S_1, S_2, S_3$. 
\begin{figure}[h!]
\begin{center}
\begin{tabular}{lll}
\resizebox{0.33\textwidth}{!}{\includegraphics{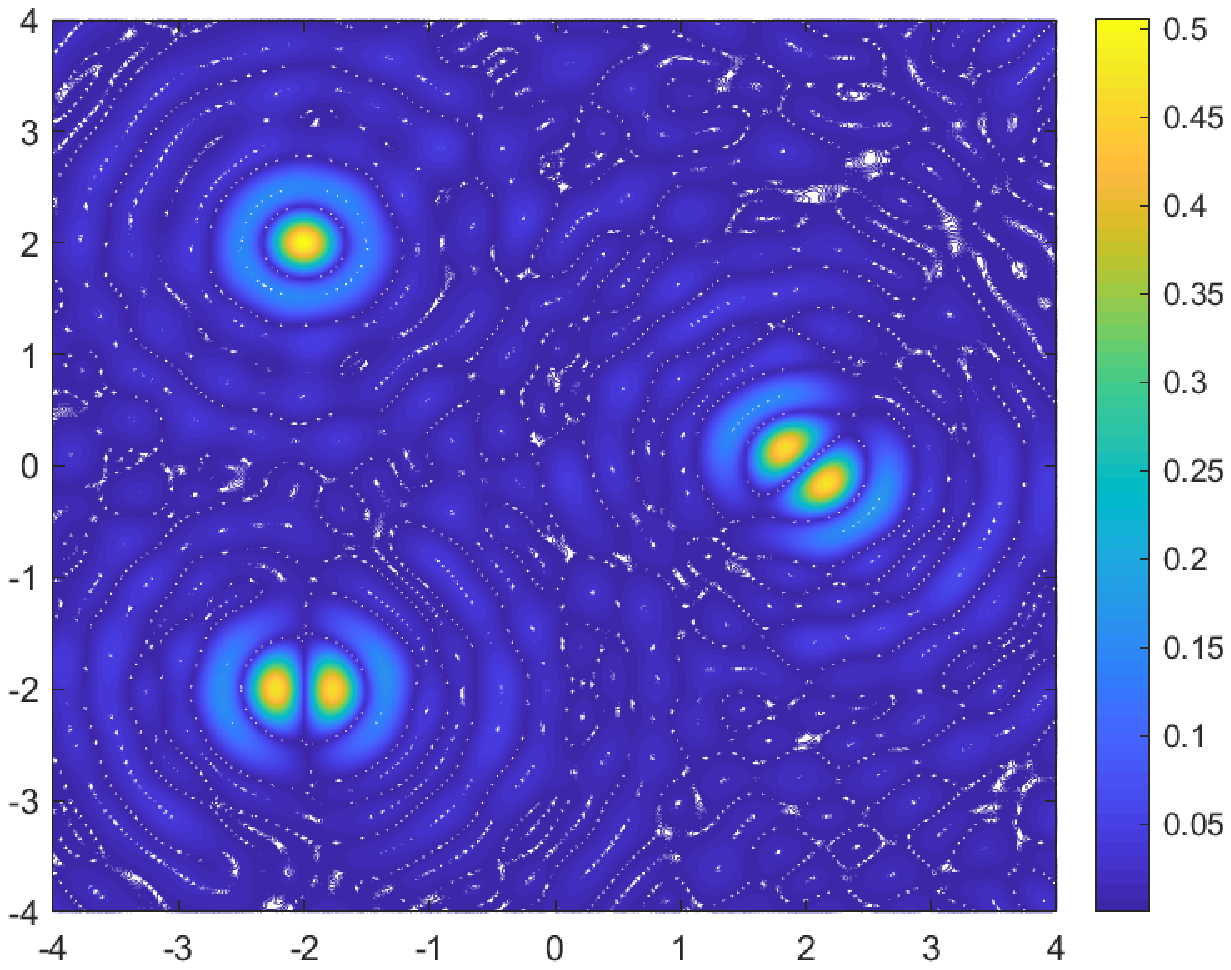}}&
\resizebox{0.33\textwidth}{!}{\includegraphics{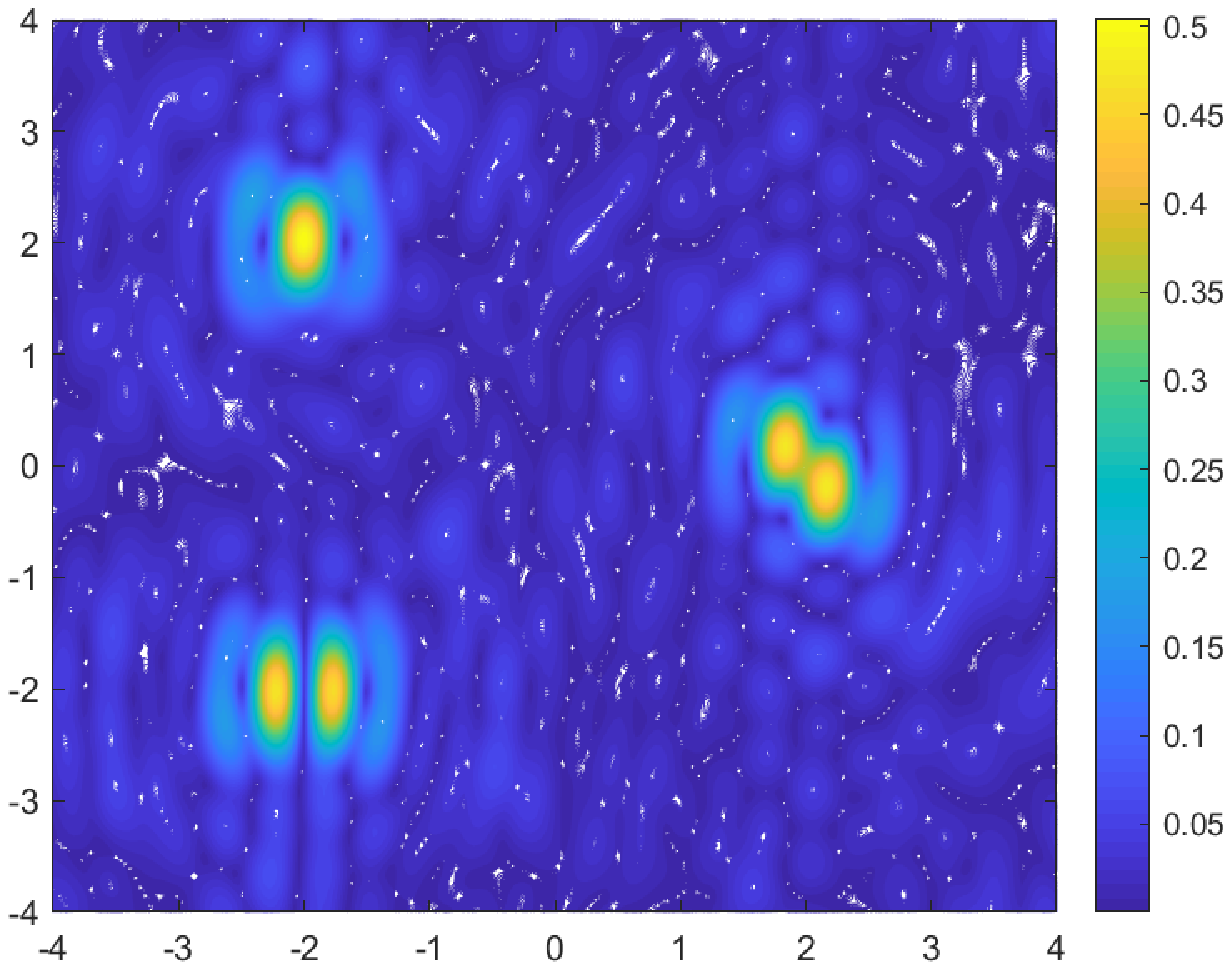}}&
\resizebox{0.33\textwidth}{!}{\includegraphics{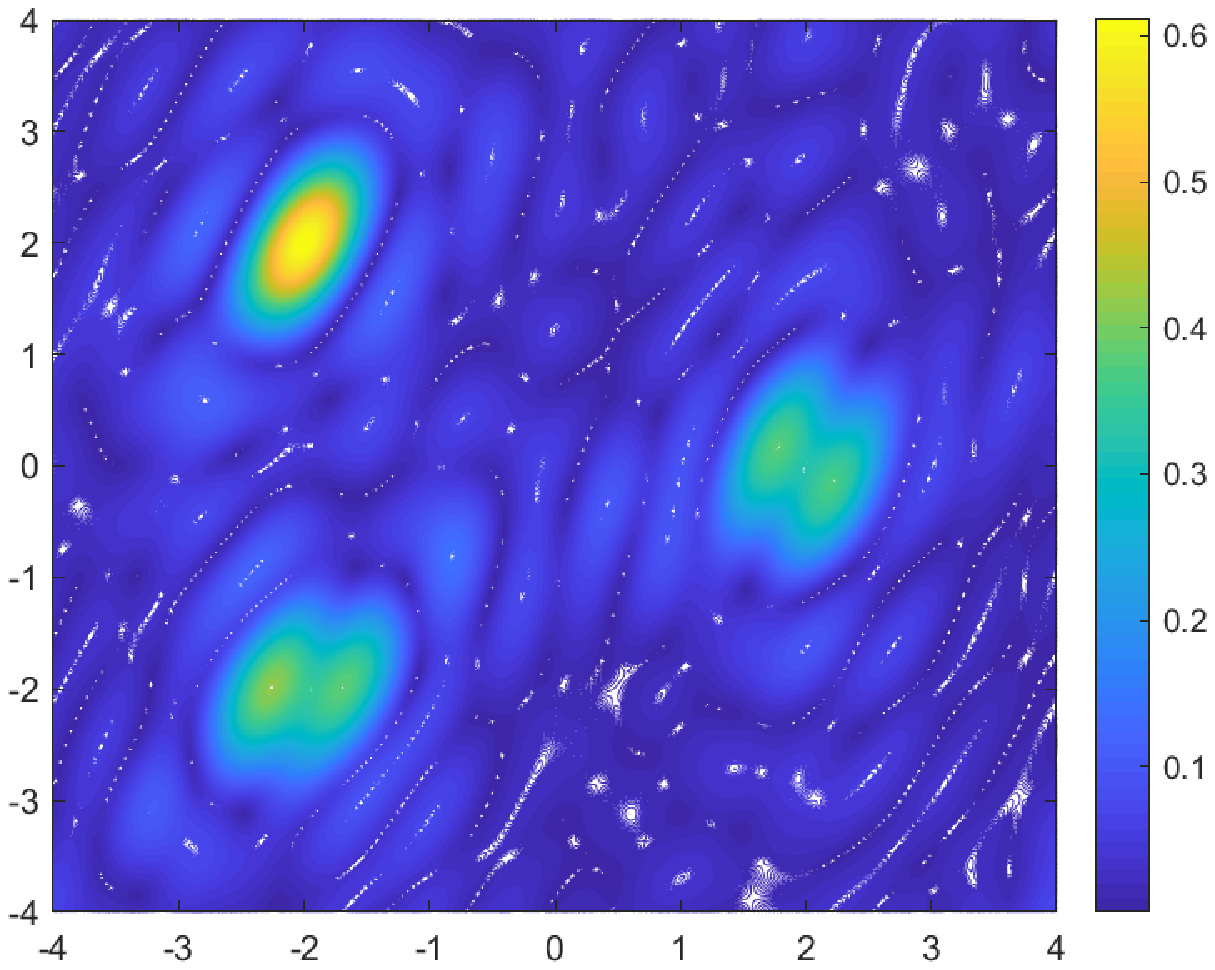}}
\end{tabular}
\end{center}
\caption{Indicator plots of DSM. Left: $S_1$. Middle: $S_2$. Right: $S_3$.}
\label{Fig2}
\end{figure}
The locations of sources $\{ z_{j}^{DSM} \}_{j=1}^3$ reconstructed  by the DSM are
\begin{eqnarray*}
&&S_1:\{(2.00,0.00),(-2.00,2.00),(-2.00,-2.00)\},\\
&&S_2: \{(2.00,-0.02),(-2.00,2.04),(-2.00,-2.02)\},\\
&&S_3: \{(1.98,0.00),(-2.00,2.00),(-2.20,-1.94)\}.
\end{eqnarray*}
For partial data, the reconstructed locations are not as accurate, but still satisfactory.

In the Bayesian inversion stage, we set $\beta=0.05$ and $\sigma=0.0004$. 
Then 20000 samples are drawn from the prior distribution and the first 5000 samples are discarded. 
The inversion results of the MCMC algorithm are shown in Table~\ref{e2t1}. 
In particular, for partial data, the reconstructed locations are improved in general.
\begin{table}[ht]
\small
    \centering
    \begin{tabular}{ l  c  c  c  c  c  c}
 \Xhline{1pt}
  \multirow{2}{*}{$j$} & \multicolumn{3}{c}{Exact Parameters} & \multicolumn{3}{c}{Reconstructed parameters for $S_1$}  \\  
  \cmidrule(lr){2-4}  \cmidrule(lr){5-7}
 & $\lambda_{j}$ & $\xi_{j}$ & $z_{j}$ & $\lambda_{j}^{\star}$ & $\xi_{j}^{\star}$ & $z_{j}^{\star}$ \\
 \Xhline{0.8pt}
  1 &  - &  $(\sqrt{2}, -\sqrt{2})$ & (2, 0) &  - & (1.4886, -1.4727) & (1.9936, -0.0019) \\
  2&   9 & - & (-2, 2) & 8.9373 & - & (-1.9863, 2.0037)\\
  3& - &  (2, 0) & (-2, -2) & - & (1.9696, -0.0350) & (-2.0030, -1.9970)\\
 \Xhline{1pt}
  \multirow{2}{*}{j } & \multicolumn{3}{c}{Reconstructed parameters for $S_2$} & \multicolumn{3}{c}{Reconstructed parameters for $S_3$}  \\  
  \cmidrule(lr){2-4}  \cmidrule(lr){5-7}
 & $\lambda_{j}^{\star}$ & $\xi_{j}^{\star}$ & $z_{j}^{\star}$ & $\lambda_{j}^{\star}$ & $\xi_{j}^{\star}$ & $z_{j}^{\star}$ \\
 \Xhline{0.8pt}
  1&  - &  (1.4087, -1.4573 ) & (1.9948, -0.0111) &  - & (1.4073, -1.1819) & (2.0037, 0.0072) \\
  2&     8.9656   & - & (-2.0077, 2.0116) &  7.7530 & - & (-1.9899, 2.0016)\\
  3 & - &  (1.7725, 0.0003) & (-2.0022, -2.0173) & - & (1.7091, 0.0544) & (-2.1487, -1.9156)\\
 \Xhline{1pt}
\end{tabular}
\caption{Exact and reconstructed parameters for Example 2.} 
\label{e2t1}
\end{table}

\subsection{Example 3} Let $F(x)=\sum_{j=1}^{2}\lambda_{j}\exp(-\xi_{j}\|x-z_{j}\|^2)$ with
\begin{eqnarray*}
&&\{\lambda_{j}\}_{j=1}^2=\{3,-4\},\\
&&\{\xi_{j}\}_{j=1}^2=\{2.5,1\},\\
&&\{z_{j}\}_{j=1}^2=\{(2,2),(-1.5,-1.5)\}.
\end{eqnarray*}
Set $[k_m, k_M]=[2,7]$ and $N_k=10$.
In Fig.~\ref{Fig3}, we plot the indicator functions for $S_1, S_2, S_3$. 
\begin{figure}[h!]
\begin{center}
\begin{tabular}{lll}
\resizebox{0.33\textwidth}{!}{\includegraphics{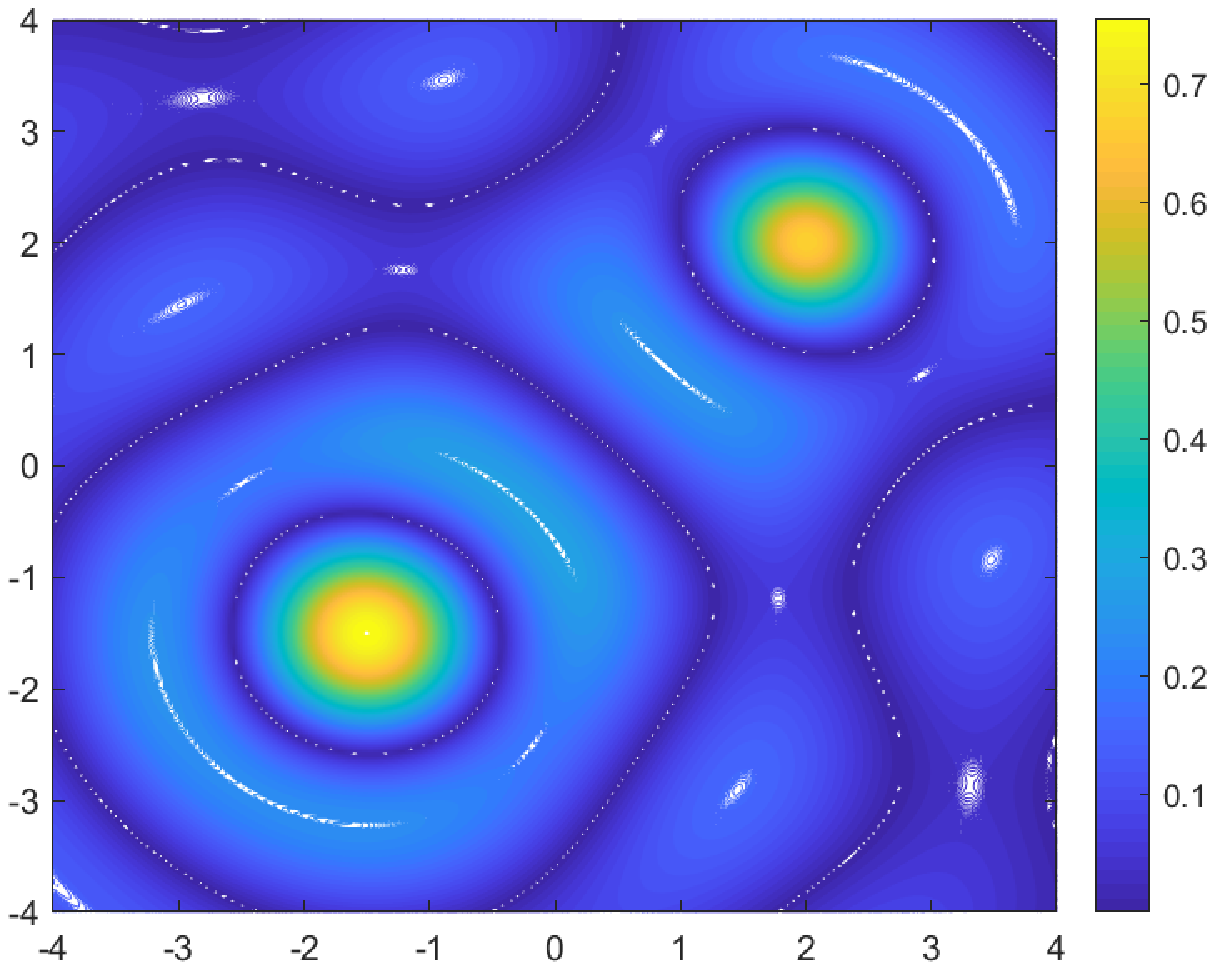}}&
\resizebox{0.33\textwidth}{!}{\includegraphics{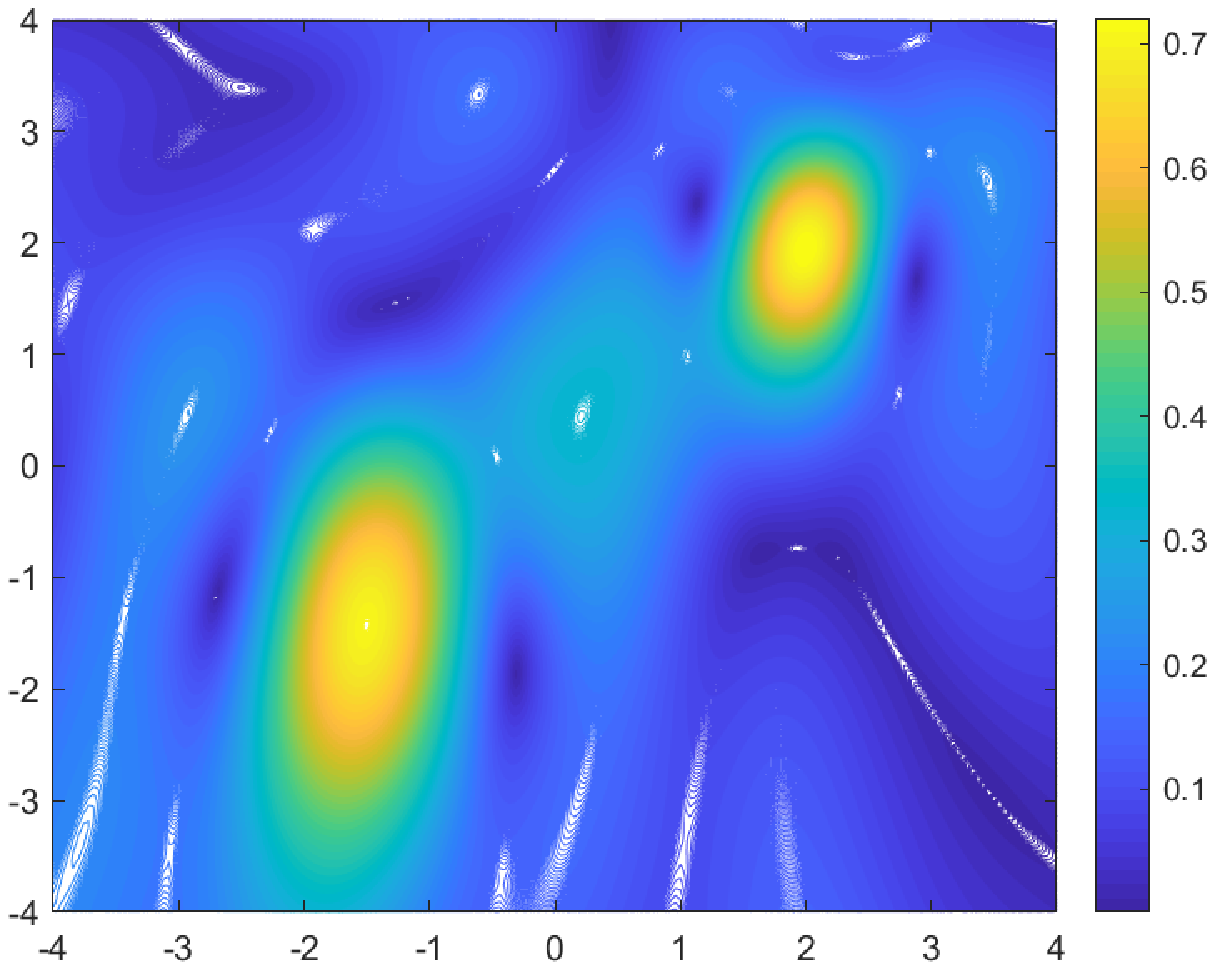}}&
\resizebox{0.33\textwidth}{!}{\includegraphics{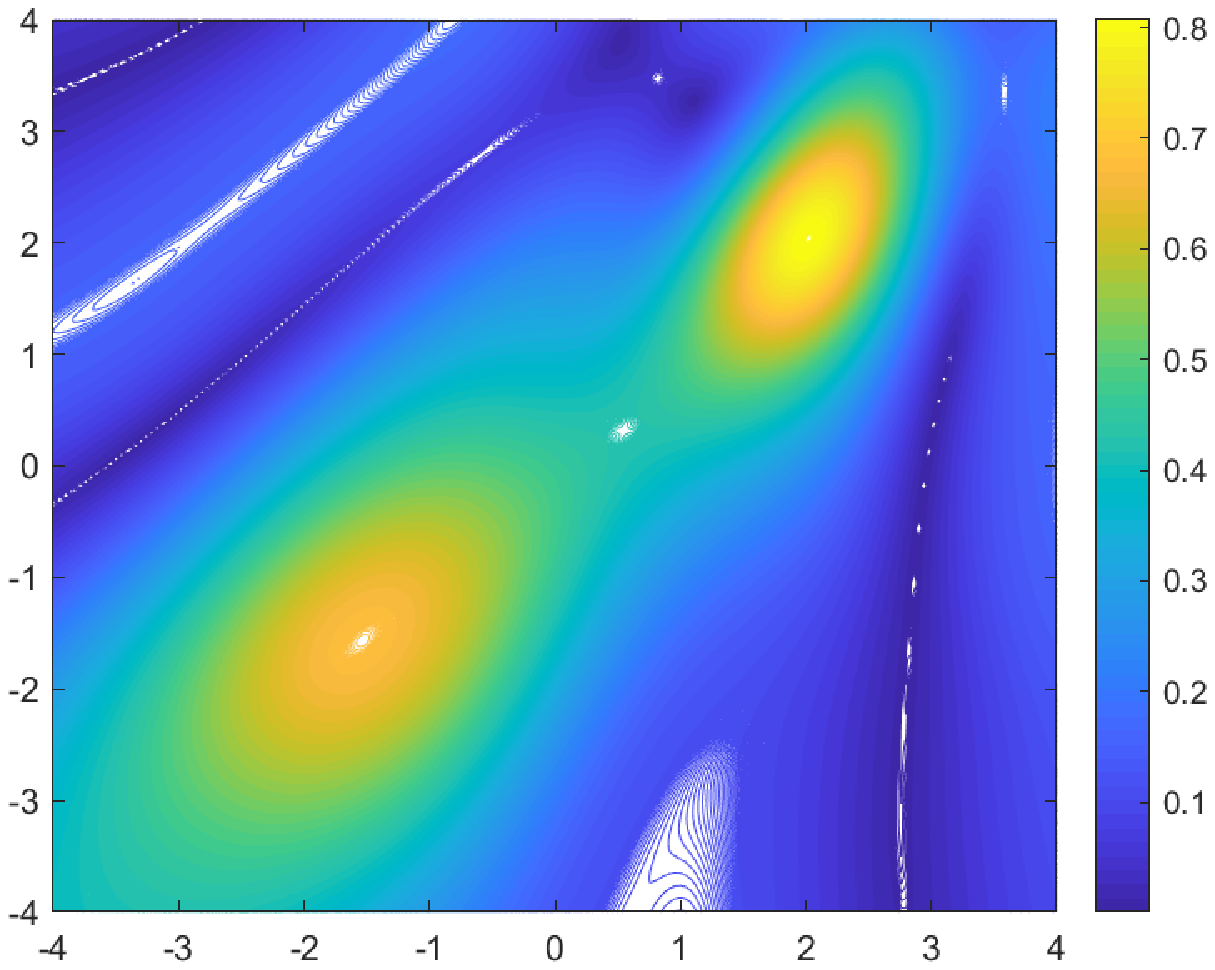}}
\end{tabular}
\end{center}
\caption{Indicator plots of DSM. Left: $S_1$. Middle: $S_2$. Right: $S_3$.}
\label{Fig3}
\end{figure}
The reconstructed locations $\{z_{j}^{DSM}\}_{j=1}^2$ by the DSM are
\begin{eqnarray*}
&&S_1:\{(2.00, 2.00),(-1.52, -1.52)\},\\
&&S_2:\{(2.00, 1.96),(-1.52, -1.44)\},\\
&&S_3:\{(2.04, 2.04),(-1.52, -1.56)\}.
\end{eqnarray*}
The locations are satisfactory. However, the results deteriorate for partial data.

In the Bayesian inversion stage, set $\beta=0.03$ and $\sigma=0.004$. 
15000 samples are drawn and the first 3000 samples are discarded. 
The inversion results are shown in Table~\ref{e3t1}. Overall, the reconstructions are satisfactory. 
Again, the Bayesian inversion significantly improves the reconstructed locations. 
\begin{table}[ht]
\small
    \centering
    \begin{tabular}{ l  c  c  c  c  c  c}
 \Xhline{1pt}
  \multirow{2}{*}{$j$} & \multicolumn{3}{c}{Exact Parameters} & \multicolumn{3}{c}{Reconstructed parameters for $S_1$}  \\  
  \cmidrule(lr){2-4}  \cmidrule(lr){5-7}
 & $\lambda_{j}$ & $\xi_{j}$ & $z_{j}$ & $\lambda_{j}^{\star}$ & $\xi_{j}^{\star}$ & $z_{j}^{\star}$ \\
 \Xhline{0.8pt}
 1 &  3 &  2.5 & (2, 2) &    3.0458 &  2.5154 & (1.9894, 2.0094) \\
  2 &   -4 & 1 & (-1.5, -1.5) & -4.0901 & 1.0202 & (-1.5012, -1.5128)\\
 \Xhline{1pt}
  \multirow{2}{*}{$j$} & \multicolumn{3}{c}{Reconstructed parameters for $S_2$}  & \multicolumn{3}{c}{Reconstructed parameters for $S_3$}  \\  
  \cmidrule(lr){2-4}  \cmidrule(lr){5-7}
 & $\lambda_{j}^{\star}$ & $\xi_{j}^{\star}$ & $z_{j}^{\star}$& $\lambda_{j}^{\star}$ & $\xi_{j}^{\star}$ & $z_{j}^{\star}$ \\
 \Xhline{0.8pt}
  1 & 2.8525 &  2.4827 & (2.0098, 1.9999) &   3.0494 & 2.3827 & (2.0211, 1.9951) \\
  2 & -4.1944 & 1.0136  & (-1.5002, -1.5067) & -3.9265 & 1.0216 & (-1.5064, -1.4779)\\
 \Xhline{1pt}
\end{tabular} 
\caption{Exact and reconstructed parameters for Example 3.} 
\label{e3t1}
\end{table}

\subsection{Example 4} Let $F(x)=\sum_{j=1}^{4}\lambda_{j}\exp(-\xi_{j}\|x-z_{j}\|^2)$ with
\begin{eqnarray*}
&& \{\lambda_{j}\}_{j=1}^4=\{2, 4, -3, 2.5\}, \\
&& \{\xi_{j}\}_{j=1}^4=\{2, 3, 2, 1\},\\
&& \{z_{j}\}_{j=1}^4=\{(2, 2), (-2, 2), (-2, -2), (2, -2)\}.
\end{eqnarray*}
Let $[k_m, k_M]=[1.5,8]$ and $N_k=15$.
In Fig.~\ref{Fig4}, we plot the indicator functions for $S_1, S_2, S_3$. 
\begin{figure}[h!]
\begin{center}
\begin{tabular}{lll}
\resizebox{0.33\textwidth}{!}{\includegraphics{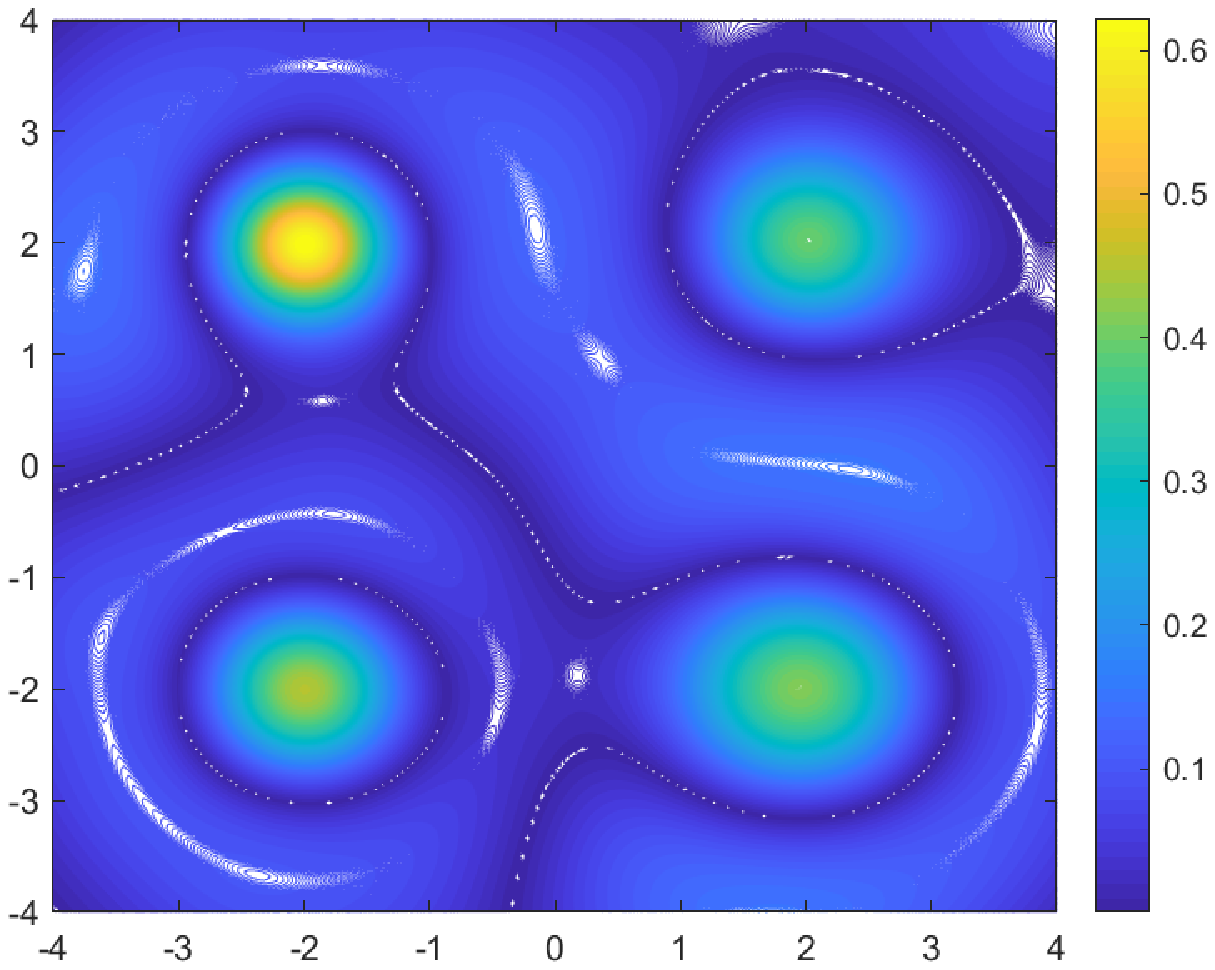}}&
\resizebox{0.33\textwidth}{!}{\includegraphics{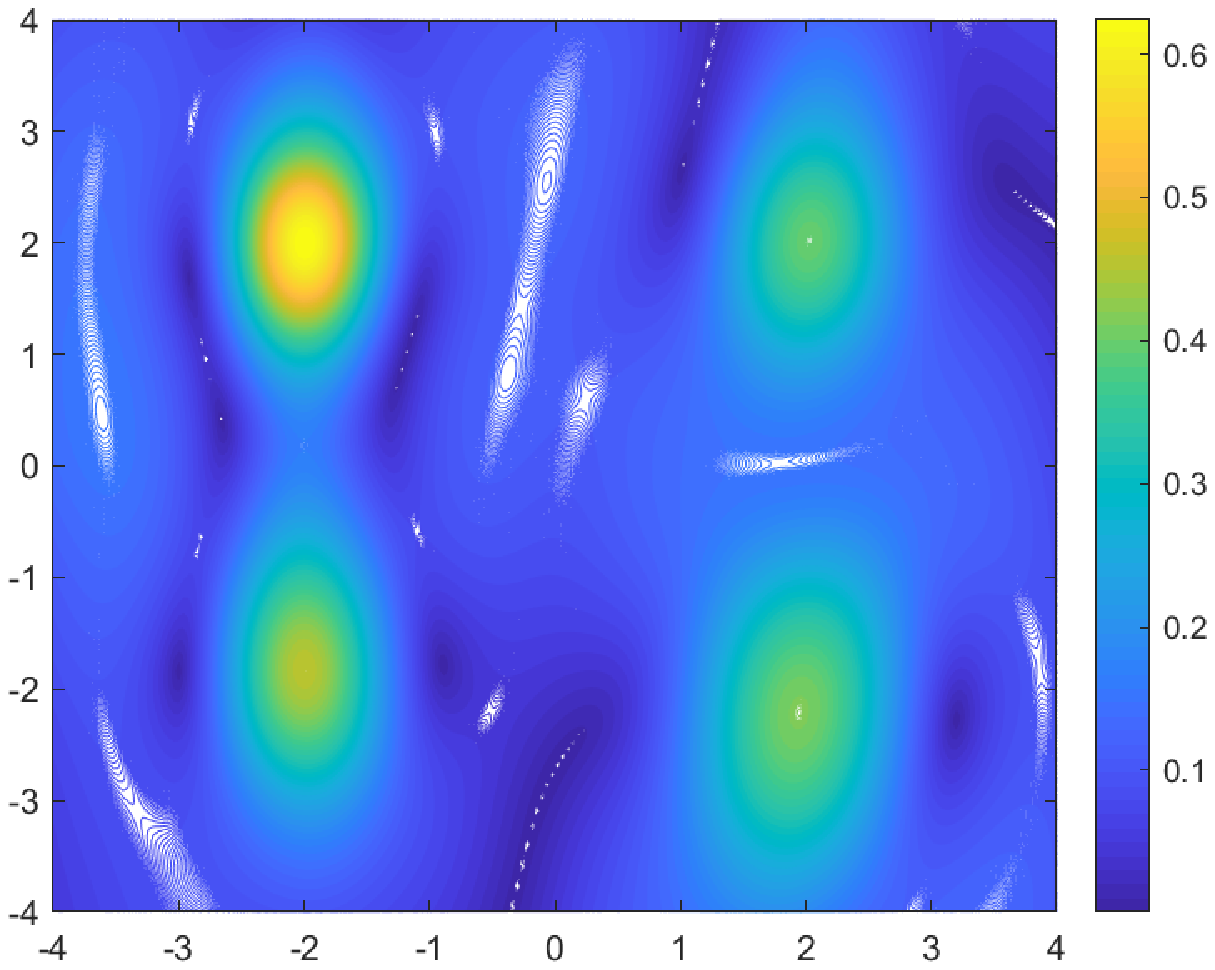}}&
\resizebox{0.33\textwidth}{!}{\includegraphics{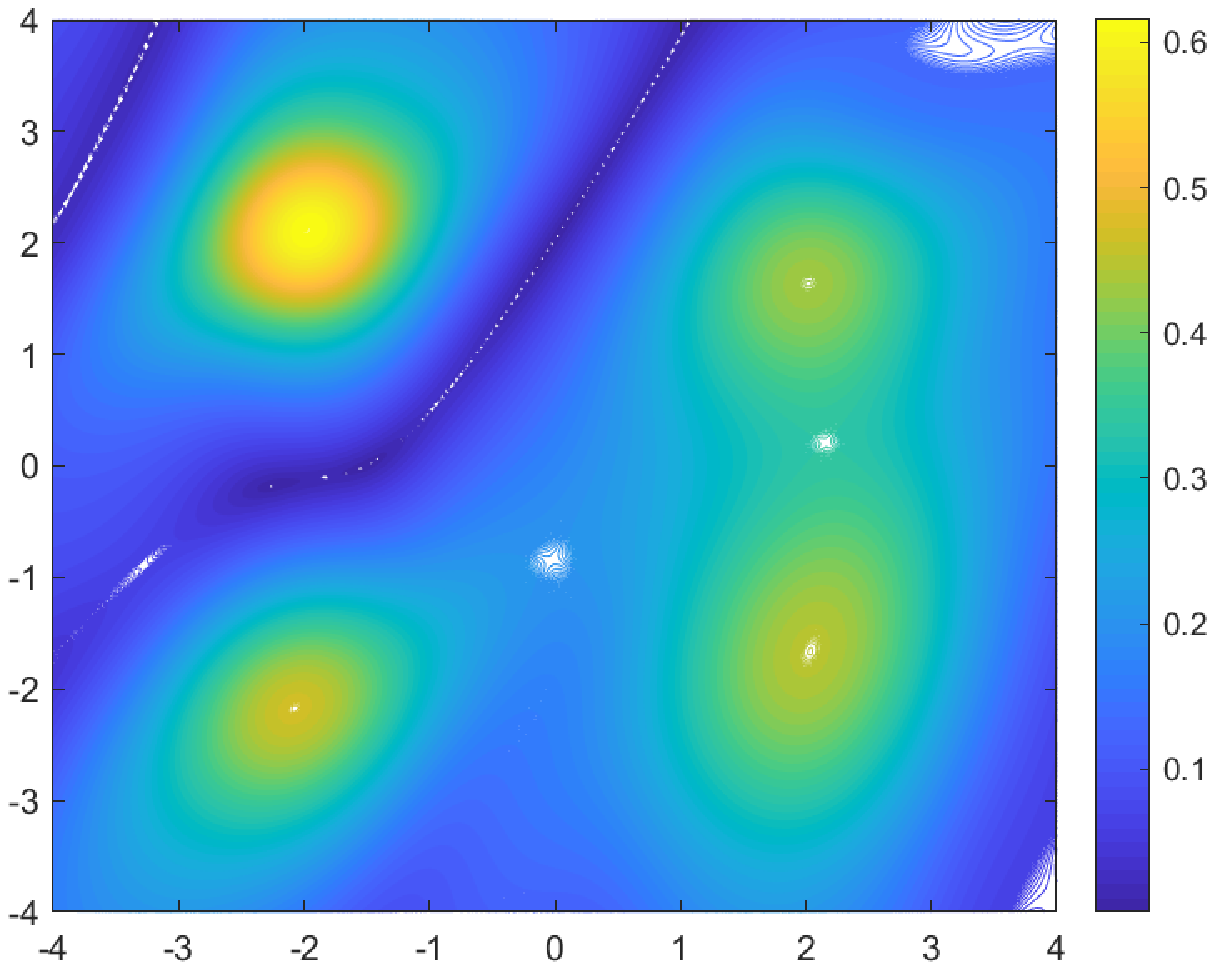}}
\end{tabular}
\end{center}
\caption{Indicator plots of DSM. Left: $S_1$. Middle: $S_2$. Right: $S_3$.}
\label{Fig4}
\end{figure} 

The reconstructed locations of sources $\{ z_{j}^{DSM} \}_{j=1}^4$ by the DSM are
\begin{eqnarray*}
&& S_1: \{(2.04,2.04),(-2.00,2.00),(-2.00,-2.00),(1.96,-2.00)\},\\
&& S_2: \{(2.04,2.04),(-2.00,2.00),(-2.00,-1.84),(1.96,-2.20)\},\\
&& S_3: \{(2.00,1.64),(-1.96,2.12),(-2.08,-2.16),(2.04,-1.64)\}.
\end{eqnarray*}
Similarly,  the results deteriorate when data becomes less.

In the Bayesian inversion stage, $\beta=0.03$ and $\sigma=0.004$. 
20000 samples are drawn and the first 5000 samples were discarded. 
The inversion results are shown in Table~\ref{e4t1}.
\begin{table}[ht]
\small
    \centering
    \begin{tabular}{ c  c  c  c  c  c  c}
 \Xhline{1pt}
  \multirow{2}{*}{$j$ } & \multicolumn{3}{c}{Exact Parameters} & \multicolumn{3}{c}{Reconstructed parameters for $S_1$}  \\  
  \cmidrule(lr){2-4}  \cmidrule(lr){5-7}
 & $\lambda_{j}$ & $\xi_{j}$ & $z_{j}$ & $\lambda_{j}^{\star}$ & $\xi_{j}^{\star}$ & $z_{j}^{\star}$ \\
 \Xhline{0.8pt}
  1 &  2 & 2 & (2, 2) &  2.2408 & 2.2712 & (2.0392, 1.9954) \\
  2 &   4 & 3 & (-2, 2) &  3.9250 & 2.9601 & (-2.0023, 2.0165)\\
  3 & -3 & 2 & (-2, -2) & -3.1428 & 2.1598 & (-1.9885, -2.0034)\\
  4 & 2.5 & 1 & (2, -2) &   2.5928 & 1.0771 & (1.9531, -2.0413)\\
 \Xhline{1pt}
  \multirow{2}{*}{$j$} & \multicolumn{3}{c}{Reconstructed parameters for $S_2$} & \multicolumn{3}{c}{Reconstructed parameters for $S_3$}  \\  
  \cmidrule(lr){2-4}  \cmidrule(lr){5-7}
 & $\lambda_{j}^{\star}$ & $\xi_{j}^{\star}$ & $z_{j}^{\star}$  & $\lambda_{j}^{\star}$ & $\xi_{j}^{\star}$ & $z_{j}^{\star}$ \\
 \Xhline{0.8pt}
  1 &  2.1579 & 2.1221 & (2.0347, 1.9412) &  1.9819   &  1.5675  & (2.0824, 1.7697  ) \\
  2 &   3.8821 & 2.8849 & (-1.9937, 2.0028) & 4.1515  & 2.9813  & (-2.0566, 2.0335 )\\
  3 & -2.8060 & 2.0517 & (-2.0167, -1.9650) & -3.2017 &   1.7718   & (-2.0677, -2.1082)\\
  4 & 2.3755 &  0.9439 & (1.9904, -2.1102) &   2.2924 & 0.8748 & (1.9733, -1.7340)\\
 \Xhline{1pt}
\end{tabular}
\caption{Exact and reconstructed parameters for Example 4.}
\label{e4t1}
\end{table}

\section{Conclusions}\label{conclusions}
A new quality-Bayesian approach is proposed to reconstruct the locations and intensities of the unknown acoustic sources. First, a direct sampling method is developed to approximate the locations of the sources. Second, the Bayesian inversion is used to get more detailed information. The locations of the sources obtained in the first step are coded in the priors.  A Metropolis-Hastings (MH) MCMC algorithm is employed to explore the posterior density. 

The DSM is fast for the qualitative information of the sources, while the Bayesian method is effective for the quantitative information. The approximate locations of the sources by the DSM are critical to the convergence of the MCMC method.  The two steps are based on the same physical model and use the same measured data. The new approach inherits the merits of both steps. In particular, when the locations by the DSM for partial data is not accurate, the results are significantly improved by the Bayesian inversion. Numerical examples show that the proposed method is effective, in particular, when the measurement data is partial.

\end{document}